\crefname{hypothesis}{Hypothesis}{Hypotheses}
\newcommand{\theoremnum}{}
\newtheorem*{theorem**}{Theorem\theoremnum}
\newenvironment{introtheorem}[1][]{%
  \renewcommand{\theoremnum}{\if\relax\detokenize{#1}\relax\else~#1\fi}
  \begin{theorem**}
}{%
  \end{theorem**}
}
\newcommand{\propositionnum}{}
\newtheorem*{proposition**}{Proposition\propositionnum}
\newenvironment{introprop}[1][]{%
  \renewcommand{\propositionnum}{\if\relax\detokenize{#1}\relax\else~#1\fi}
  \begin{proposition**}
}{%
  \end{proposition**}
}
\mathchardef\mhyphen="2D
\title{Sparse Signature Coefficient Recovery via
Kernels}
\author{Daniil Shmelev\thanks{Department of Mathematics,
Imperial College London,
  (\email{daniil.shmelev23@imperial.ac.uk}, \email{c.salvi@imperial.ac.uk}).}
\and Cristopher Salvi\footnotemark[1]}
\begin{document}

\maketitle

\begin{abstract}
Central to rough path theory is the signature transform of a path, an infinite series of tensors given by the iterated integrals of the underlying path. The signature poses an effective way to capture sequentially ordered information, thanks both to its rich analytic and algebraic properties as well as its universality when used as a basis to approximate functions on path space. Whilst a truncated version of the signature can be efficiently computed using Chen's identity, there is a lack of efficient methods for computing a sparse collection of iterated integrals contained in high levels of the signature. We address this problem by leveraging signature kernels, defined as the inner product of two signatures, and computable efficiently by means of PDE-based methods. By forming a filter in signature space with which to take kernels, one can effectively isolate specific groups of signature coefficients and, in particular, a singular coefficient at any depth of the transform. We show that such a filter can be expressed as a linear combination of suitable signature transforms and demonstrate empirically the effectiveness of our approach. To conclude, we give an example use case for sparse collections of signature coefficients based on the construction of $N$-step Euler schemes for sparse CDEs.
\end{abstract}

\begin{keywords}
path signatures, signature kernels, sequential data, PDE solvers, GPU parallel computations.
\end{keywords}

\begin{MSCcodes}
60L10, 65Y05
\end{MSCcodes}

\section{Introduction} \label{chapter:intro}

It is reasonable to assume that, on sufficiently fine time scales, most instances of streamed information—such as text, sound, video, or time series—can be represented as a path $x : [0,1] \to \mathbb R^d$. Chen \cite{chen1957integration} first demonstrated, and subsequent work in \emph{rough path theory} \cite{hambly2010uniqueness, boedihardjo2016signature} further developed and generalized, that any path can be faithfully represented, up to reparameterization, by its collection of iterated integrals, referred to as the \emph{signature}. The latter is defined as the solution in the free tensor algebra $T((\mathbb R^d)) := \prod_{n=0}^\infty (\mathbb R^d)^{\otimes n}$ to the tensor exponential differential equation $dy = y \otimes dx$, serving as the fully non-commutative analogue of the classical exponential function. Running Picard iteration on such differential equation leads to the equivalent, and perhaps more familiar, definition of the signature as collection of iterated integrals $(\int_{0<t_1<...<t_n<1} dx_{t_1} \otimes ... \otimes dx_{t_n})_{n \in \mathbb{N}}$.\par\medskip

It is well known that the restriction of linear functionals on $T((\mathbb R^d))$ to the range of the signature form a unital algebra of real-valued functions that separates points \cite{lyons2004differential}. A straightforward application of the Stone-Weierstrass theorem yields that linear functionals acting on signatures are dense in the space of continuous, real-valued functions on compact sets of unparameterized paths. Thus, the collection of these iterated integrals provide an accurate description of the path, and linear functionals on the signature approximating functions on paths can be easily determined by linear regression, making signature coefficients an ideal set of features for machine learning applications dealing with sequential data \cite{fermanian2023new, cass2024lecture}. Indeed, signature methods have seen a rapid rise in popularity in recent years in the data science community and have been applied in a variety of contexts including deep learning 
\cite{kidger2019deep, morrill2021neural, cirone2023neural, hoglund2023neural, cirone2024theoretical, issa2024non, barancikova2024sigdiffusions}, kernel methods \cite{salvi2021signature, lemercier2021distribution, lemercier2021siggpde, manten2024signature}, quantitative finance \cite{arribas2020sigsdes, salvi2021higher, horvath2023optimal, pannier2024path}, cybersecurity \cite{cochrane2021sk}, and computational neuroscience \cite{holberg2024exact}.\par\medskip

The paths one deals with in data science applications are usually obtained by piecewise linear interpolation of discrete time series. Leveraging Chen's relation and the elementary property that the signature of a linear path equals the tensor exponential of its increment, the signature of a piecewise linear path $x = x^1 * ... *x^L$ can be elegantly computed using the expression
\begin{equation}\label{eqn:sig_pwl}
    S(x) = \exp(\Delta x^1)\otimes ... \otimes \exp(\Delta x^L),
\end{equation}
which is used by all the main python packages for computing signatures such as \texttt{esig} \cite{esig}, \texttt{iisignature} \cite{reizenstein2018iisignature} and \texttt{signatory} \cite{signatory}. The expression in (\ref{eqn:sig_pwl}) provides a dense computation of the signature with complexity $\mathcal{O}(Ld^n)$, where $n \in \mathbb{N}$ is the truncation level, which quickly becomes impractical to implement due to the exponential growth in the number of signature features. However, in many cases, only a small number of signature coefficients are necessary during inference, allowing the majority to be ignored, for example after applying regularization techniques such as Lasso. With Chen's relation, an isolated signature coefficient at level $n$ of the transform can be computed at a complexity of $\mathcal{O}(Ln^2)$, with parallelisation allowing this to be reduced to $\mathcal{O}(Ln)$. Yet beyond this, there is currently no available numerical technique that enables the computation of only a sparse subset of signature coefficients. \par\medskip

In this paper, we introduce a novel technique for recovering sparse sets of signature coefficients by computing the inner product of the full signature with an appropriately designed filter. When the filter is derived from the signatures of carefully selected paths, the resulting inner product can be efficiently computed as a linear combination of signature kernels. These kernels can, in turn, be evaluated using PDE-based kernel methods \cite{salvi2021signature}. Our method will have a serial complexity of $\mathcal{O}(Ln 2^n)$, but with the important benefit that the dependence on $n$ can be parallelised away, resulting in an algorithm of complexity $\mathcal{O}(L)$ after parallelization.\par\medskip

Our main results can be summarised as follows. Given a continuous path $x \in C_1([0,1], \mathbb{R}^n)$, let $\lambda \odot x$ denote the component-wise scaling of an $n$-dimensional path $x$ by $\lambda = (\lambda_1, \ldots, \lambda_n) \in \mathbb{R}^n$, such that the resulting path has components $\lambda \odot x = (\lambda_1 x^{(1)}, \ldots, \lambda_n x^{(n)})$. We note that the signature of the resulting path takes the form
\begin{equation*}
    S(\lambda \odot x)^J = S(x)^{J} \prod_{k=1}^m \lambda_{j_k}.
\end{equation*}

for $J = (j_1, \ldots, j_m) \in \{1,\ldots, n\}^m$. For a multi-index $I$, let $\mathcal{P}(I)$ denote the set of multi-indices which are permutations of $I$. Differentiating with respect to the above scaling of the underlying path, we get
\begin{equation*}
    \frac{\partial^n}{\partial \lambda_1 \cdots \partial \lambda_n} S(\lambda \odot x)^J \hspace{0.5mm} \biggr\rvert_{\lambda = 0} \hspace{1mm} = \begin{cases}
        S(x)^J, &J \in \mathcal{P}(1,\ldots, n),\\
        0, &J \notin \mathcal{P}(1,\ldots, n).
        \end{cases}
\end{equation*}

That is, we are left only with coefficients whose multi-index is a permutation of $(1,\ldots, n)$, which we will call the \textit{anagram class} of $(1,\ldots, n)$. It follows that taking kernels with the linear path $y$, whose signature is given by
\begin{equation*}
    S(y)^J = |J|!
\end{equation*}
extracts the sum $S(x)^{\mathcal{P}(1,\ldots,n)}$, weighted by $n!$. This forms our first result, \textit{anagram class isolation}. For two paths $x,y$ of the same dimension, let $\mathbf{k}_{x,y} = \left< S(x), S(y) \right>$ denote the corresponding \textit{signature kernel}. Given a $d$-dimensional path $x \in C_1([0,1], \mathbb{R}^d)$ with components $x = (x^{(1)}, \ldots, x^{(d)})$, and a multi-index $I = (i_1, \ldots, i_n) \in \{1,\ldots, d\}^n$, let $x^I \in C_1([0,1], \mathbb{R}^n)$ denote the reordered path whose components are given by $x^I = (x^{(i_1)}, \ldots, x^{(i_n)})$.

\begin{introprop}[\ref{thm:kderiv}] \text{\normalfont(Anagram Class Isolation)}
Let $x \in C_1([0,1], \mathbb{R}^d)$ and let $y \in C_1([0,1], \mathbb{R}^n)$ be the linear path $y_t = (t,\ldots, t)$. Then
    \begin{equation*}
        \frac{\partial^n}{\partial \lambda_1 \cdots \partial \lambda_n} \mathbf{k}_{\hspace{0.5mm} x^I, \hspace{0.5mm} \lambda \odot y} \hspace{0.5mm} \biggr\rvert_{\lambda = 0} \hspace{1mm} = \hspace{1mm} \frac{1}{n!} \hspace{1mm} S(x)^{\mathcal{P}(I)}.
    \end{equation*}
\end{introprop}

If we intend to recover specific coefficients, we should select the path with which to take kernels more carefully. We look for a \say{filtering} path $z$, such that the action of taking kernels with $z$ isolates a specific permutation, or order, of $I$ in $\mathcal{P}(I)$. The next result shows that this is possible.

\begin{introprop}[\ref{prop:order_isolation_exact}] \text{\normalfont(Order Isolation)}
Let $x \in C_1([0,1], \mathbb{R}^d)$. Then there exists a path $z \in C_1([0,1], \mathbb{R}^n)$ such that
    \begin{equation*}
        \frac{\partial^n}{\partial \lambda_1 \cdots \partial \lambda_n} \mathbf{k}_{\hspace{0.5mm} x^I, \hspace{0.5mm} \lambda \odot z} \hspace{0.5mm} \biggr\rvert_{\lambda = 0} \hspace{1mm} = \hspace{1mm} S(x)^I.
    \end{equation*}
\end{introprop}

By using a finite difference $D^{(n)}_\lambda \cdot \big\rvert_{\lambda = 0}$ to approximate the derivative $\partial^n / \partial \lambda_1 \cdots \partial \lambda_n$, we can obtain an approximation for the coefficient $S(x)^I$. If $D^{(n)}_\lambda$ is chosen to be exact on linear functions of $\lambda_1, \ldots, \lambda_n$, such as a forward or central difference, we will note that the error in the approximation originates entirely from levels of the signature deeper than the $n^{th}$. By finding a suitable way of zeroing these levels, we arrive at our main result - an algorithm for recovering a signature coefficient from repeated kernel evaluations.

\begin{introtheorem}[\ref{thm:main}]
    There exists a path $z \in C_1([0,1], \mathbb{R}^n)$ and scalars $\alpha_i = \alpha_i(M) \in \mathbb{R}$, $\beta_i = \beta_i(M) \in \mathbb{R}$ such that for any $x \in C_1([0,1], \mathbb{R}^d)$,
    \begin{equation*}
        \sum_{i=0}^M \alpha_i D^{(n)}_\lambda \mathbf{k}_{x^I, \hspace{0.5mm} \beta_i \lambda \odot z} \Big\rvert_{\lambda = 0} \to S(x)^I
    \end{equation*}
    
    as $M \to \infty$.
\end{introtheorem}

We will show that this convergence is quick, and as such small values of $M$ are sufficient in practice. With suitable parallelization, the above sum can be computed efficiently in $\mathcal{O}(L)$ time by leveraging the result that the signature kernel $\mathbf{k}$ satisfies a Goursat PDE \cite{salvi2021signature}. We believe this remarkable result will have a significant impact in the data science community as it is, to our knowledge, the first allowing to compute coefficients in level $n$ of the signature with a complexity that is independent of $n$. With the advent of massively parallel computing, this poses a feasible alternative to Chen's relation when computational speed is of importance.\par\medskip

The paper is organized as follows. We begin in Section \ref{chapter:intro} by introducing fundamental concepts and definitions, discussing direct approaches to signature computation and outlining our proposed methodology. In Section \ref{chapter:anagram}, we discuss how, given a target multi-index, one can filter out coefficients up to permutation of the multi-index by considering derivatives of path scalings. We will refer to this group of coefficients as the \textit{anagram class} of the target coefficient. Having achieved this, in Section \ref{chapter:order} we will offer a way to extract a specific coefficient from the anagram class, by taking kernels with the signature of an axis path. We discuss a numerical scheme for computing these kernels in Section \ref{chapter:pde}. An important benefit of using Chen's relation to compute isolated coefficients is the ability to immediately recover related coefficients from intermediary steps in the computation. In Section \ref{section:grid_retrieval}, we note that this can also be achieved with the algorithm we propose. Moreover, with certain variations of our algorithm, one can recover a significantly larger proportion of coefficients from intermediary steps than with Chen's relation, without the need for additional computations. Finally, we discuss a potential application of our approach in Section \ref{chapter:example}, and present concluding remarks in Section \ref{chapter:conclusion}.\par\medskip

A basic implementation of our proposed method along with code reproducing the results of this paper is available publicly at \url{https://github.com/daniil-shmelev/sigcoeff}.

\subsection{Computation of the Signature Transform} \label{section:signature_transform}
We begin by giving a brief introduction to some fundamental definitions and results, and outlining the basic approach to computing signatures. Throughout, we will take $(V, \lVert \cdot \rVert_V)$ to be a finite-dimensional Banach space over $\mathbb{R}$, equipped with an inner product $\left<\cdot, \cdot\right>_V$ and an orthonormal basis $\{e_i : 1 \leq i \leq d\}$ with corresponding dual basis $\{e^*_i : 1 \leq i \leq d\}$. From Section \ref{chapter:anagram} onwards, $V$ will always be taken to be the Euclidean space $\mathbb{R}^d$.

\begin{definition}
    Denote by $C_p([a,b], V)$ the space of continuous paths from $[a,b]$ to $V$ of finite p-variation, written $C_p(V)$ when the interval can be inferred from context.
\end{definition}

\begin{definition}
    Define $T(V) = \bigotimes_{i=0}^{\infty} V^{\otimes i}$ to be the tensor algebra of formal polynomials over $V$ endowed with the usual operations of $+$ and $\otimes$.
\end{definition}

\begin{definition} [Signature Transform]
    Let $x \in C_p([a,b], V)$ for $p \in [1,2)$. Then for any $[s,t] \subseteq [a,b]$, define the $k^{th}$ \textit{level} of the signature transform as the iterated Young integral
    \begin{equation*}
    S(x)_{[s,t]}^{(k)} = \int_{s < t_1 < \cdots < t_k < t} dx_{t_1} \otimes dx_{t_2} \otimes \cdots \otimes dx_{t_k} \in V^{\otimes k}
    \end{equation*}
    
    and define the signature transform of $x$ on $[s,t]$ to be the formal series

    \begin{equation*}
        S(x)_{[s,t]} = \left(1, S(x)_{[s,t]}^{(1)}, \ldots, S(x)_{[s,t]}^{(k)}, \ldots \right) \in T((V)).
    \end{equation*}

    We will often drop the subscript $[s,t]$ when it is clear from context.
    
\end{definition}

\begin{definition} [Signature Coefficient]
    Let $I = (i_1, \ldots, i_k)$ be a multi-index of integers in $\{1,\ldots, d\}$. Let $e^*_I := e^*_{i_1} \cdots e^*_{i_k} \in (V^*)^{\otimes k}$. By viewing $e^*_I$ as an element of $T((V))^*$, define the scalar
    \begin{equation*}
        S(x)^I = e^*_I(S(x)) \in \mathbb{R},
    \end{equation*}
    which we may write as the iterated integral
    \begin{equation}\label{defn:signature_coeff}
        \int_{s < t_1 < \cdots < t_k < t} dx_{t_1}^{(i_1)} dx_{t_2}^{(i_2)} \cdots dx_{t_k}^{(i_k)},
    \end{equation}
    where $x^{(i)}$ denotes the $i^{th}$ \textit{channel} of the path $x_t = (x^{(1)}, \ldots, x^{(d)})_t$.
\end{definition}

An important property of the signature transform is that its levels decay factorially, motivating the use of only the first $n$-many levels in a truncated version of the transform. Chen's relation for the signature of concatenated paths, combined with the simplicity of the signature of a linear path, allows for an efficient computation of this truncated signature.

\begin{lemma}[Factorial Decay {\cite[Lemma 5.1]{lyons2014rough}}]\label{thm:factorialdecay}
    Let $x \in C_1([a,b], V)$ and $k \in \mathbb{N}$. Then
    \begin{equation*}
        \left\lVert S(x)^{(k)} \right\rVert_{V^{\otimes k}} \leq \frac{\lVert x \rVert ^k _{1,[a,b]}}{k!},
    \end{equation*}
    where $\lVert x \rVert _{1,[a,b]}$ is the 1-variation of $x$ on $[a,b]$. 
\end{lemma}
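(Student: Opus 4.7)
The plan is to bound the norm of the iterated integral by the iterated integral of norms of increments, and then evaluate the resulting integral over the ordered simplex by a symmetry argument.

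First, I would invoke submultiplicativity of the tensor norm, i.e.\ $\lVert v_1 \otimes \cdots \otimes v_k \rVert_{V^{\otimes k}} \le \lVert v_1 \rVert_V \cdots \lVert v_k \rVert_V$, which holds since $V$ is a finite-dimensional Hilbert space and the induced norm on $V^{\otimes k}$ is an admissible cross-norm. Applying this to the approximating Riemann--Stieltjes sums for the iterated integral and passing to the limit (which is justified because $x$ has finite $1$-variation, so all integrals involved are plain Riemann--Stieltjes), I obtain
\begin{equation*}
    \left\lVert S(x)^{(k)}_{[a,b]} \right\rVert_{V^{\otimes k}}
    \le \int_{a<t_1<\cdots<t_k<b} d\mu_x(t_1)\cdots d\mu_x(t_k),
\end{equation*}
where $\mu_x$ is the positive Borel measure on $[a,b]$ associated with the $1$-variation function $t \mapsto \lVert x \rVert_{1,[a,t]}$, so that $\mu_x([a,b]) = \lVert x \rVert_{1,[a,b]}$.

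Next, I would compute the right-hand side using the standard symmetry trick. The product measure $\mu_x^{\otimes k}$ on the cube $[a,b]^k$ has total mass $\lVert x \rVert_{1,[a,b]}^k$. Outside the union of the diagonal hyperplanes $\{t_i = t_j\}$ (which is $\mu_x^{\otimes k}$-null, since $\mu_x$ has no atoms would be convenient, but even without atomlessness the $k!$ ordered simplices exhaust the cube up to the diagonal, and by symmetry of $\mu_x^{\otimes k}$ under coordinate permutations all of them carry the same measure). Therefore
\begin{equation*}
    \int_{a<t_1<\cdots<t_k<b} d\mu_x(t_1)\cdots d\mu_x(t_k) \le \frac{\lVert x \rVert_{1,[a,b]}^k}{k!},
\end{equation*}
which combined with the first step yields the claim.

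The main obstacle is the first step, namely rigorously lifting the submultiplicativity bound from finite Riemann--Stieltjes sums to the Young/Stieltjes limit in a tensor-valued setting. In the $p=1$ case this is essentially routine bounded-variation integration, but one must be careful that the chosen tensor norm really is a cross-norm on $V^{\otimes k}$ (so that $\lVert \cdot \rVert_{V^{\otimes k}}$ is submultiplicative on elementary tensors) and that the Riemann--Stieltjes sums converge in this norm; once this is set up, the passage to the limit is immediate from the triangle inequality. A possible minor technical point in the symmetry step is handling any atoms of $\mu_x$, which can be dealt with by a direct combinatorial count on the discrete support or by approximating $x$ by atomless paths.
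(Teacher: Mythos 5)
Your argument is correct and is essentially the standard proof of the factorial decay bound in the bounded-variation case, which is all the cited reference does (the paper itself offers no proof, only a citation). Two small remarks: with the Hilbert--Schmidt inner product the paper uses on $V^{\otimes k}$, the inequality $\lVert v_1 \otimes \cdots \otimes v_k \rVert \le \prod_i \lVert v_i \rVert_V$ is in fact an equality on elementary tensors, so the first step is immediate; and since $x$ is a \emph{continuous} path of finite $1$-variation, the function $t \mapsto \lVert x \rVert_{1,[a,t]}$ is continuous, hence $\mu_x$ is automatically atomless and the hedging in your symmetry step is unnecessary -- each of the $k!$ open ordered simplices carries exactly $\lVert x \rVert_{1,[a,b]}^k / k!$ of the product mass.
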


\begin{proposition}[{\cite[Section 1.3.1]{cass2024lecture}}] \label{prop:linearsignature}
    Let $x \in C_1([a,b], V)$ be the linear path
    \begin{equation*}
        x_t = x_a + \frac{t - a}{b-a}(x_b - x_a).
    \end{equation*}
    Let $x_{a,b}$ denote the canonical inclusion of $x_b - x_a \in V$ into $T((V))$. Then
    \begin{equation*}
        S(x)_{[a,b]} = \exp(x_{a,b}),
    \end{equation*}
    where $\exp$ denotes the tensor exponential
    \begin{equation*}
        \exp(v) := \sum_{k=0}^\infty \frac{v^{\otimes k}}{k!}.
    \end{equation*}
\end{proposition}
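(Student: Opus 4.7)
The plan is to compute the $k$-th level of the signature directly from the iterated Young integral definition and recognise the resulting expression as the $k$-th term of the tensor exponential series. Since the path $x$ is linear and therefore differentiable with constant derivative on $[a,b]$, we have $dx_t = \frac{x_b - x_a}{b - a} \, dt$, so the integrand $dx_{t_1} \otimes \cdots \otimes dx_{t_k}$ factorises cleanly as $(x_b - x_a)^{\otimes k} (b-a)^{-k} \, dt_1 \cdots dt_k$, pulling the tensor factor out of the integral.

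The remaining integral is the volume of the $k$-simplex $\{a < t_1 < \cdots < t_k < b\}$ with respect to Lebesgue measure, which is the standard computation $\int_{a < t_1 < \cdots < t_k < b} dt_1 \cdots dt_k = (b-a)^k / k!$. Combining these two steps gives
\begin{equation*}
S(x)^{(k)}_{[a,b]} = \frac{(x_b - x_a)^{\otimes k}}{k!}.
\end{equation*}
Summing over $k \geq 0$ and recognising the right-hand side as the definition of $\exp(x_{a,b})$ yields $S(x)_{[a,b]} = \exp(x_{a,b})$.

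There is no real obstacle here beyond bookkeeping: the computation is a direct unrolling of the iterated integral on a path with constant derivative. The only point that deserves a line of care is justifying the factorisation of the tensor product of one-forms, which follows from multilinearity of $\otimes$ and the fact that each $dx_{t_j}$ is a scalar multiple of the fixed vector $x_b - x_a$. The convergence of the resulting series in $T((V))$ is automatic since we are in the formal tensor algebra, and factorial decay of the terms is consistent with \cref{thm:factorialdecay}.
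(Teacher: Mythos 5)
Your proof is correct. The paper does not supply its own proof of this proposition; it simply cites \cite[Section~1.3.1]{cass2024lecture}, and your direct computation---pulling the constant derivative $\dot{x}_t = (x_b - x_a)/(b-a)$ out of the iterated integral and evaluating the volume of the $k$-simplex as $(b-a)^k/k!$ to get $S(x)^{(k)} = (x_b - x_a)^{\otimes k}/k!$---is precisely the standard argument given in that reference.
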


\begin{proposition} [Chen's relation { \cite{chen1954iterated}}] \label{prop:chen} Let $x \in C_p([a,b], V)$ and $y \in C_p([b,c], V)$ for $p \in [1,2)$. Then
\begin{equation*}
    S(x * y)_{[a,c]} = S(x)_{[a,b]} \otimes S(y)_{[b,c]},
\end{equation*}
where $x*y$ denotes the path concatenation of $x$ and $y$.
\end{proposition}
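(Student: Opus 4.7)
The plan is to prove the identity coefficient-by-coefficient, that is, to verify it at each level of the tensor algebra by working with a fixed multi-index $I = (i_1, \ldots, i_k) \in \{1,\ldots,d\}^k$. Writing out the definition,
\begin{equation*}
    S(x*y)^I_{[a,c]} = \int_{a < t_1 < \cdots < t_k < c} d(x*y)^{(i_1)}_{t_1} \cdots d(x*y)^{(i_k)}_{t_k}.
\end{equation*}
The key step is to decompose the simplex of integration according to where the strictly increasing sequence $t_1 < \cdots < t_k$ crosses the junction point $b$. Precisely, I would partition the simplex into $k+1$ disjoint regions indexed by $j \in \{0, 1, \ldots, k\}$, where the $j$-th region is $\{a < t_1 < \cdots < t_j < b < t_{j+1} < \cdots < t_k < c\}$, with the obvious conventions at the endpoints $j=0$ and $j=k$.

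On the $j$-th region, the differentials factor cleanly: the first $j$ differentials are with respect to $x$ on $[a,b]$, while the last $k-j$ are with respect to $y$ on $[b,c]$. A Fubini-type argument for iterated Young integrals lets me write the integral over this region as the product
\begin{equation*}
    S(x)^{(i_1,\ldots,i_j)}_{[a,b]} \cdot S(y)^{(i_{j+1},\ldots,i_k)}_{[b,c]}.
\end{equation*}
Summing over $j$ yields
\begin{equation*}
    S(x*y)^I_{[a,c]} = \sum_{j=0}^k S(x)^{(i_1,\ldots,i_j)}_{[a,b]} \cdot S(y)^{(i_{j+1},\ldots,i_k)}_{[b,c]},
\end{equation*}
and the right-hand side is exactly the coefficient of $e_{i_1} \otimes \cdots \otimes e_{i_k}$ in $S(x)_{[a,b]} \otimes S(y)_{[b,c]}$ once one unpacks the graded multiplication in $T((V))$ via $V^{\otimes k} \cong \bigoplus_{j=0}^k V^{\otimes j} \otimes V^{\otimes (k-j)}$. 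Since $k$ and $I$ were arbitrary, this establishes the identity at every level and hence in $T((V))$.

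The main obstacle is justifying the domain decomposition and the factorisation of the Young integral across the splitting point $b$. For $p \in [1,2)$, continuity of $x$ and $y$ implies that the hyperplane $\{t_j = b\}$ contributes nothing to the integral, and the standard additivity result for Young integrals over adjacent subintervals makes the factorisation on each region rigorous; in the $p=1$ (bounded variation) case this is classical, and for $1 < p < 2$ it is ensured by Young's continuity estimate together with an inductive argument on $k$. A cleaner alternative, which avoids iterating Fubini by hand, is to verify Chen's identity for piecewise linear paths (where both sides reduce to tensor exponentials of increments and the identity is an algebraic consequence of Proposition \ref{prop:linearsignature}) and then extend to general $p$-variation paths by density and continuity of the signature map in $p$-variation topology, invoking standard stability estimates from rough path theory.
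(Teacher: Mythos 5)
The paper does not prove this proposition; it is stated as a classical result and attributed to Chen (1954), so there is no internal proof to compare against. Your argument is the standard and correct proof: the simplex $\{a < t_1 < \cdots < t_k < c\}$ decomposes (up to a null set on the hyperplanes $t_j = b$) into $k+1$ product regions indexed by where the increasing sequence crosses $b$; on each region the iterated integral factors by Fubini into $S(x)^{(i_1,\ldots,i_j)}_{[a,b]} \cdot S(y)^{(i_{j+1},\ldots,i_k)}_{[b,c]}$, and summing over $j$ recovers exactly the level-$k$ component of the tensor product $S(x)_{[a,b]} \otimes S(y)_{[b,c]}$. Your remark about Young-integral regularity for $1 < p < 2$, and your alternative route via piecewise linear paths plus density and continuity of the signature in $p$-variation, are both legitimate ways to make the measure-theoretic and analytic details airtight. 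No gaps.
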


Suppose we have a path, $x \in C_1([0,1], V)$, given by the linear interpolation of $L+1$ many points, such that
\begin{equation*}
    x = \hat{x}_{1} * \hat{x}_{2} * \cdots * \hat{x}_{L}
\end{equation*}
where $\hat{x}_k$ are linear paths. For the avoidance of doubt, we will refer to $\left\lVert x \right\rVert_1$ as the \textit{$1$-variation} of the path, and to $L$ as the \textit{path length}. Given a multi-index $I = (i_1, \ldots, i_n)$, suppose we are interested in computing the signature coefficient $S(x)^I$. A simple approach using Chen's relation is to iteratively compute
\begin{equation} \label{eq:coefficient_chen}
    S(\hat{x}_{1} * \cdots * \hat{x}_{j+1})^{(i_1, \ldots, i_m)} = \sum_{k=1}^m S(\hat{x}_{1} * \cdots * \hat{x}_{j})^{(i_1, \ldots, i_k)} \cdot S(\hat{x}_{j+1})^{(i_{k+1}, \ldots, i_m)}
\end{equation}
for all $1 \leq m \leq k$, where $S(\hat{x}_{j+1})^{(i_{k+1}, \ldots, i_m)}$ is known from Proposition \ref{prop:linearsignature}. Such a computation has complexity $\mathcal{O}(Ln^2)$, but can be reduced down to $\mathcal{O}(Ln)$ by computing the $n$ coefficients required at each step in parallel. As we will see in Section \ref{chapter:example}, in practical use cases for sparse collections of signature coefficients, it is often the case that if $S(x)_{[0,1]}^I$ is required, then so is $S(x)_{[0,t]}^J$ for some $J \subset I$, $t \leq 1$. It is easy to see that when $J = (i_1, \ldots, i_m)$ is an initial segment of $I$, and $t$ is the end point of some linear segment $\hat{x}_k$, one can recover $S(x)^J_{[0,t]}$ from intermediate steps in the above approach without the need for additional computations.

\subsection{The Signature Kernel}
We now define the inner product of two signatures, called the \textit{signature kernel}. As discussed in the introduction, we will see later how these kernels can be used to \say{filter out} the coefficients which we are interested in from the full signature.

\begin{definition} [Hilbert-Schmidt inner product]
Given an inner product $\left<\cdot, \cdot \right>_V$ on $V$, equip $V^{\otimes k}$ with the inner product
\begin{equation*}
    \left<u, v \right>_{V^{\otimes k}} = \prod_{i=1}^k \left< u_i, v_i \right>_V
\end{equation*}
for any $u = u_1 \cdots u_k$, $v = v_1 \cdots v_k \in V^{\otimes k}$ and define an inner product on $T(V)$ by
\begin{equation*}
    \left< A,B \right>_{T(V)} = \sum_{k=0}^{\infty} \left<A_k, B_k \right>_{V ^ {\otimes k}}
\end{equation*}
for any $A = (A_0, A_1, \ldots)$, $B = (B_0, B_1, \ldots) \in T(V)$. Define the \textit{extended tensor algebra} $T((V))$ to be the completion of $T(V)$ with respect to $\left< \cdot, \cdot \right>_{T(V)}$.
\end{definition}

\begin{definition}[Signature Kernel]
    Let $x \in C_1([a,b], V)$ and $y \in C_1([c,d], V)$. The signature kernel $\mathbf{k}_{x,y} : [a,b] \times [c,d] \rightarrow \mathbb{R}$ is given by 
    \begin{equation*}
        \mathbf{k}_{x,y}(t,s) = \left< S(x)_{[a,t]}, S(y)_{[c,s]} \right>_{T((V))}.
    \end{equation*}

    We will often write $\mathbf{k}_{x,y}$ when the kernel is taken over the entire interval.
\end{definition}

By applying the Cauchy-Schwarz inequality, we immediately get the following corollary of Lemma \ref{thm:factorialdecay}.

\begin{corollary} \label{cor:cauchyfactorialdecay}
    Let $x, y \in C_1([a,b], V)$ and $k \in \mathbb{N}$. Then
    \begin{equation*}
        \left\lvert \left< S(x)_{[a,b]}^{(k)}, S(y)_{[c,d]}^{(k)} \right>_{V^{\otimes k}} \right\rvert \leq \frac{\lVert x \rVert ^k _{1,[a,b]} \lVert y \rVert ^k _{1,[c,d]}}{(k!)^2}.
    \end{equation*}
\end{corollary}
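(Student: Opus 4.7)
The proof is essentially a one-line application of the two ingredients mentioned in the surrounding text: the Cauchy–Schwarz inequality on the Hilbert space $(V^{\otimes k}, \langle \cdot, \cdot \rangle_{V^{\otimes k}})$, and the factorial decay estimate of Lemma \ref{thm:factorialdecay}. The plan is therefore to bound the absolute value of the inner product by the product of the norms, and then apply the factorial decay bound to each norm separately.

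More precisely, I would first observe that the inner product $\langle \cdot, \cdot \rangle_{V^{\otimes k}}$ defined via the tensor product of the inner products on $V$ is a genuine Hilbert space inner product (this follows from the assumption that $V$ itself is a Hilbert space and the standard construction of tensor product Hilbert spaces). Consequently, the Cauchy–Schwarz inequality applies, yielding
\begin{equation*}
    \left\lvert \left\langle S(x)_{[a,b]}^{(k)}, S(y)_{[c,d]}^{(k)} \right\rangle_{V^{\otimes k}} \right\rvert \leq \left\lVert S(x)_{[a,b]}^{(k)} \right\rVert_{V^{\otimes k}} \cdot \left\lVert S(y)_{[c,d]}^{(k)} \right\rVert_{V^{\otimes k}}.
\end{equation*}

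Next, I would apply Lemma \ref{thm:factorialdecay} twice, once to $x$ on $[a,b]$ and once to $y$ on $[c,d]$, to bound each factor by $\lVert x \rVert_{1,[a,b]}^k / k!$ and $\lVert y \rVert_{1,[c,d]}^k / k!$ respectively. Multiplying these bounds together and combining the factorials into $(k!)^2$ in the denominator yields exactly the desired inequality.

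There is no real obstacle here: the only mild subtlety is verifying that the tensor inner product on $V^{\otimes k}$ induces a norm consistent with the one appearing in Lemma \ref{thm:factorialdecay}, which is immediate from the definition $\lVert v \rVert_{V^{\otimes k}}^2 = \langle v, v \rangle_{V^{\otimes k}}$ and the fact that $V$ is equipped with an orthonormal basis. Hence the statement follows as an immediate corollary, as claimed in the text.
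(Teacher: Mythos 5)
Your proof is correct and matches the paper's (implicit) argument exactly: the paper simply states that the corollary follows from Lemma~\ref{thm:factorialdecay} "by applying the Cauchy–Schwarz inequality," which is precisely the two-step bound you carry out. Nothing to add.
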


Occasionally, it will be helpful to refer to a truncated signature kernel in which we only consider the signature up to the $n^{th}$ level. We define this as follows.

\begin{definition}[Truncated Signature Kernel]
    Let $x \in C_1([a,b], V)$ and $y \in C_1([c,d], V)$. The truncated signature kernel $\mathbf{k}^n_{x,y} : [a,b] \times [c,d] \rightarrow \mathbb{R}$ is given by
    \begin{equation*}
        \mathbf{k}^n_{x,y}(t,s) = \sum_{k=0}^n \left< S(x)^{(k)}_{[a,t]}, S(y)^{(k)}_{[c,s]} \right>_{V^{\otimes k}}.
    \end{equation*}
\end{definition}

\begin{theorem}[Signature Kernel PDE {\cite[Theorem 2.5]{salvi2021signature}}]\label{thm:goursatpde}
    Let $x \in C_1([a,b], V)$ and $y \in C_1([c,d], V)$. Then $\mathbf{k}_{x,y}$ is the solution of the Goursat PDE
    \begin{equation}\label{sigkernelpde}
        \frac{\partial ^ 2 \mathbf{k}_{x,y}}{\partial t \partial s} = \left< \dot{x}_t, \dot{y}_s \right>_V \mathbf{k}_{x,y}, \quad \mathbf{k}_{x,y}(a, \cdot) = \mathbf{k}_{x,y}(\cdot, c) = 1,
    \end{equation}
    the existence and uniqueness of a solution to which follows from \cite[\textit{Theorems 2 \& 4}]{lees1960goursat}.
\end{theorem}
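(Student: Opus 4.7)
The plan is to expand the kernel as a series over signature levels and differentiate term by term, exploiting the elementary fact that the endpoint-derivative of an iterated integral strips off its outermost one-form. Starting from
\[
\mathbf{k}_{x,y}(t,s) = \sum_{k=0}^\infty \langle S(x)^{(k)}_{[a,t]}, S(y)^{(k)}_{[c,s]}\rangle_{V^{\otimes k}},
\]
the boundary conditions $\mathbf{k}_{x,y}(a,\cdot) = \mathbf{k}_{x,y}(\cdot,c) = 1$ are immediate, since the signature on a degenerate interval collapses to $(1,0,0,\ldots)$: every term with $k \geq 1$ vanishes, leaving only the constant $1$ at level $0$.

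Next I would do the single-level computation, which is the heart of the argument. By the fundamental theorem of calculus for Young integrals,
\[
\partial_t S(x)^{(k)}_{[a,t]} = S(x)^{(k-1)}_{[a,t]} \otimes \dot{x}_t, \qquad \partial_s S(y)^{(k)}_{[c,s]} = S(y)^{(k-1)}_{[c,s]} \otimes \dot{y}_s,
\]
for $k \geq 1$. Because the Hilbert--Schmidt inner product on $V^{\otimes k}$ factors as a product of $V$-inner products, one obtains
\[
\partial_t \partial_s \langle S(x)^{(k)}_{[a,t]}, S(y)^{(k)}_{[c,s]}\rangle_{V^{\otimes k}} = \langle \dot{x}_t, \dot{y}_s\rangle_V \cdot \langle S(x)^{(k-1)}_{[a,t]}, S(y)^{(k-1)}_{[c,s]}\rangle_{V^{\otimes (k-1)}}.
\]
The $k=0$ summand is constant in $(t,s)$ and contributes zero. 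Re-indexing $k \mapsto k+1$ and summing reassembles $\mathbf{k}_{x,y}$ on the right, producing exactly $\partial_t \partial_s \mathbf{k}_{x,y} = \langle \dot{x}_t, \dot{y}_s\rangle_V \cdot \mathbf{k}_{x,y}$.

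The hard part will be justifying the interchange of differentiation with the infinite sum, which must be carried out for the first partials and for the mixed second derivative. This is exactly what Lemma~\ref{thm:factorialdecay} and Corollary~\ref{cor:cauchyfactorialdecay} are designed to supply: each summand in the kernel series, together with its termwise first and mixed second partials, is dominated uniformly on $[a,b] \times [c,d]$ by a constant multiple of $\lVert x \rVert_{1,[a,b]}^k \lVert y \rVert_{1,[c,d]}^k /(k!)^2$, with the differentiated series absorbing extra $\lVert \dot{x} \rVert_\infty$ and $\lVert \dot{y} \rVert_\infty$ factors. The resulting absolute and uniform convergence legitimizes term-by-term differentiation via the standard Weierstrass criterion, showing that $\mathbf{k}_{x,y}$ is \emph{a} solution of the Goursat problem. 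The existence and uniqueness result cited from Lees then identifies $\mathbf{k}_{x,y}$ as \emph{the} unique solution, completing the proof.
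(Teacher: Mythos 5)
The paper does not prove this theorem at all: it is quoted as a result of Salvi et al.\ (\cite[Theorem 2.5]{salvi2021signature}), with existence and uniqueness deferred to Lees. So there is no in-paper argument to compare against. That said, your reconstruction is correct and is essentially the proof given in the cited source: expand the kernel level by level, invoke the fundamental theorem of calculus for Young integrals to peel $\dot{x}_t$ and $\dot{y}_s$ off the outermost slots, use the multiplicativity of the Hilbert--Schmidt inner product on $V^{\otimes k}$ to factor out $\langle\dot{x}_t,\dot{y}_s\rangle_V$, re-index to reassemble the kernel on the right-hand side, and justify termwise differentiation via the factorial-decay bounds in \cref{thm:factorialdecay} and \cref{cor:cauchyfactorialdecay} (the Weierstrass $M$-test). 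The boundary conditions follow as you say, since the signature over a degenerate interval is $(1,0,0,\ldots)$.

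One minor technical caveat: the theorem as stated takes $x,y$ in $C_1$ (bounded $1$-variation), so $\dot{x}_t$ and $\dot{y}_s$ are guaranteed to exist only almost everywhere, and the PDE should strictly be read in an integral (or distributional) sense; your use of $\lVert\dot{x}\rVert_\infty$, $\lVert\dot{y}\rVert_\infty$ in the domination step tacitly assumes absolutely continuous paths with essentially bounded derivative, which covers the piecewise linear setting used throughout this paper but is slightly narrower than $C_1$. This does not affect the argument for the paper's purposes, but is worth flagging if one wants the theorem at full stated generality.
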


In \cite{salvi2021signature} it was shown that for paths obtained by linearly interpolating discrete input streams of length $L$ and dimension $d$, computation of the signature kernel can be achieved in $\mathcal{O}(L^2d)$ time. The advantage of computing the full signature kernel over the truncated version is that numerical PDE schemes lend themselves well to parallelisation. As such, the complexity may be reduced to $\mathcal{O}(Ld)$ on suitable GPU hardware by parallelising along anti-diagonals of the discretisation grid (see Section \ref{chapter:pde}).

\subsection{Universal Nonlinearity}
Arguably one of the most important results in the study of the signature transform is that of its \textit{univeral nonlinearity}, which states that continuous functions on path space can be well approximated by linear functionals on the signature.

\begin{theorem} [Universal Nonlinearity {\cite[Theorem 3.3]{lyons2022signature}}] \label{thm:signature_universality}
    Given a suitable topology on $\mathcal{C}_1([a,b], V)$, let $\mathcal{K} \subset \mathcal{C}_1([a,b], V)$ be compact and $f : \mathcal{K} \rightarrow \mathbb{R}$ a continuous function. Then for any $\varepsilon > 0$ there exists a truncation level $n \in \mathbb{N}$ and $\alpha_{i,I} \in \mathbb{R}$ such that for all $x \in \mathcal{K}$
    \begin{equation*}
        \left \lvert f(x) - \sum_{i=0}^n \sum_{I \in \{1, \ldots, d\}^i} \alpha_{i,I} \hspace{0.5mm} S(x)_{[a,b]}^I \right \rvert \leq \varepsilon.
    \end{equation*}
\end{theorem}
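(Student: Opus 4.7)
The plan is to recognize the claim as a Stone--Weierstrass density statement. Let $\mathcal{A} \subset C(\mathcal{K}, \mathbb{R})$ denote the set of functions of the form $x \mapsto \langle \ell, S(x)_{[a,b]} \rangle_{T((V))}$, where $\ell$ ranges over continuous linear functionals on $T((V))$ supported on only finitely many tensor levels (equivalently, finite linear combinations of the coordinate functionals $e^*_I$). If $\mathcal{A}$ is a subalgebra of $C(\mathcal{K}, \mathbb{R})$ that contains the constants and separates points, then Stone--Weierstrass will give uniform approximation of $f$ on $\mathcal{K}$, which is exactly the bound in the statement, with the truncation level $n$ being the largest $|I|$ appearing in the approximating combination.

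I would first dispose of the easy conditions. The constants lie in $\mathcal{A}$ since $S(x)^{()} = 1$, so the functional $e^*_{()}$ yields the constant function $1$ and any other constant is a scalar multiple. Continuity of each $x \mapsto \langle \ell, S(x)\rangle$ on $\mathcal{K}$ follows from the continuity of the truncated signature map in the $1$-variation topology together with the continuity of $\ell$ on finitely many tensor levels. The algebra condition is the main algebraic ingredient: for any two multi-indices $I$ and $J$, the product $S(x)^I \cdot S(x)^J$ expands as a finite nonnegative-integer linear combination of signature coefficients $S(x)^K$, indexed by the shuffle interleavings of $I$ and $J$. This is a standard consequence of the Fubini-type rearrangement of iterated integrals and makes $\mathcal{A}$ closed under multiplication.

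The main obstacle is the separation of points property, since the signature is only injective modulo tree-like equivalence by the theorem of Hambly and Lyons. This is precisely where the phrase \say{suitable topology} in the hypothesis does the heavy lifting: one assumes $\mathcal{K}$ is equipped with a topology under which tree-like equivalent paths are identified, or equivalently that $\mathcal{K}$ consists of representatives on which $S$ is injective (for instance time-augmented paths, where the first coordinate strictly increases and hence precludes any nontrivial tree-like reduction). Under this convention, distinct points of $\mathcal{K}$ have distinct signatures, so some $e^*_I$ witnesses the separation. An application of Stone--Weierstrass on the compact space $\mathcal{K}$ then delivers the desired $\varepsilon$-approximation by a finite linear combination of signature coefficients.
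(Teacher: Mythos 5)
The paper does not prove this theorem: it is cited verbatim from Lyons (2022), and the introduction explicitly remarks that the result is ``a straightforward application of the Stone--Weierstrass theorem.'' Your proposal reconstructs exactly that standard argument (constants via $S(x)^{()}=1$, closure under multiplication via the shuffle product, and point separation modulo tree-like equivalence handled by the ``suitable topology'' hypothesis), so it is correct and matches the approach the paper takes for granted.
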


This is extended to $\mathcal{C}_p$ in \cite[Theorem 1.4.7]{cass2024lecture}. A discussion of suitable topologies on $\mathcal{C}_p$ can be found in \cite{cass2024lecture, cass2024topologies}. Theorem \ref{thm:signature_universality} provides our main motivation for computing isolated signature coefficients. Suppose for a given function $f$ and tolerance $\varepsilon$ we know the coefficients $\alpha_{i,I}$. Moreover, suppose the $\alpha_{i,I}$ are sparse and mostly vanish. Let $x$ be a piecewise linear path obtained from interpolating an input data stream of length $L$. To compute $f(x)$ one can, of course, compute $S(x)$ up to level $n$ using Chen's relation to get an approximate value for $f(x)$. However, for a path of length $L$ this would have a computational complexity of $\mathcal{O}(Ld^n)$ \cite{kidger2020signatory, reizenstein2018iisignature} and would involve unnecessary computation of unused signature coefficients. As discussed earlier, a more careful application of Chen's relation using Equation \eqref{eq:coefficient_chen} reduces the complexity significantly to $\mathcal{O}(Ln^2)$, or $\mathcal{O}(Ln)$ after parallelisation of the computation. The approach we present in this paper will have a serial complexity of $\mathcal{O}(Ln 2^n)$, but with the benefit of being easily parallelisable down to $\mathcal{O}(L)$, surpassing Chen's relation by a factor of $n$.

\subsection{Methodology}

Given a multi-index $I = (i_1, \ldots, i_n) \in \{1,\ldots, d\}^n$ representing the signature coefficient which we aim to compute, the main idea of the approach is to create a filter $F \in \text{span}(\mathcal{S})$ such that for any multi-index $J$,
\begin{equation*}
F^{J} =
\begin{cases}
1 &\text{ if } J = I,\\
0 &\text{ otherwise.}
\end{cases}
\end{equation*}

We then note that, for $x \in C_1(V)$, $\left< S(x), F \right> = S(x)^{I}$. If $F$ can be expressed linearly in terms of signature transforms of paths, then we may rewrite the above inner product as a linear combination of signature kernels, which by Theorem \ref{thm:goursatpde} are computable efficiently as solutions of Goursat PDEs. We introduce some definitions which will aid with our construction of $F$.

\begin{definition}
    Given a multi-index $J$, let $\mathcal{P}(J)$ denote the set of multi-indices which are permutations of $J$.
\end{definition}

\begin{example}
    $\mathcal{P}(1,2,2) = \{(1,2,2), (2,1,2), (2,2,1)\}$.
\end{example}

\begin{definition}
    For a set $\mathcal{I}$ of multi-indices, let $S(x)^{\mathcal{I}}$ denote the sum $\sum_{I \in \mathcal{I}} S(x)^I$.
\end{definition}

Within our construction of $F$, we will extensively make use of component-wise path scalings, applied to channels of $x$ which are given by the multi-index $I$. We will define these path scalings, as well as the path $x$ restricted to $I$, as follows.

\begin{definition}
    Let $\lambda = (\lambda_1, \ldots, \lambda_d) \in \mathbb{R}^d$. For $z \in C_p(V)$ given by $z_t = \sum_{i=1}^d z^{(i)}_{t} e_i$, denote by $\lambda \odot z \in C_1(V)$ the path given by $(\lambda \odot z)_t = \sum_{i=1}^d \lambda_i z^{(i)}_t e_i$.
\end{definition}

\begin{definition}
    For given a path $x = \left(x^{(1)}, \ldots, x^{(d)}\right) \in C_1([0,1], \mathbb{R}^d)$ and multi-index $I = (i_1, \ldots, i_n)$, we will denote by $x^I \in C_1([0,1], \mathbb{R}^n)$ the path with components $x^I = \left(x^{(i_1)}, \ldots, x^{(i_n)}\right)$.
\end{definition}

The problem of forming $F$ using signature transforms can be split into 2 sub-problems:

\begin{enumerate}
    \item Anagram class isolation: How do we zero all coefficients given by multi-indices outside of $\mathcal{P}(I)$?
    \item Order isolation: How do we zero coefficients given by multi-indices within $\mathcal{P}(I)$ whilst setting the coefficient at index $I$ to $1$?
\end{enumerate}

\section{Anagram Class Isolation} \label{chapter:anagram}

We solve the problem of anagram class isolation by applying a path scaling $\lambda = (\lambda_1, \ldots, \lambda_n) \in \mathbb{R}^n$ to the path $x^I$ and noting that, on the level of signature coefficients, the resulting multiplicative factor is constant up to reordering of the multi-index. For the anagram class $\mathcal{P}(I)$, this factor is precisely the product $\lambda_1 \cdots \lambda_n$ of the components of the scaling. Moreover, this factor is the unique monomial which does not vanish after differentiating with respect to $\lambda_1, \ldots, \lambda_n$ at 0. We formalise this in the following proposition.

\begin{proposition} \label{thm:kderiv}
Let $x \in C_1([0,1], \mathbb{R}^d)$ and let $y \in C_1([0,1], \mathbb{R}^n)$ be the linear path $y_t = t \mathbf{e}$, where $\mathbf{e} = e_1 +  \cdots + e_n$. Then
    \begin{equation} \label{eq:kderiv}
        \frac{\partial^n}{\partial \lambda_1 \cdots \partial \lambda_n} \mathbf{k}_{\hspace{0.5mm} x^I, \hspace{0.5mm} \lambda \odot y} \hspace{0.5mm} \biggr\rvert_{\lambda = 0} \hspace{1mm} = \hspace{1mm} \frac{1}{n!} \hspace{1mm} S(x)^{\mathcal{P}(I)}.
    \end{equation}
\end{proposition}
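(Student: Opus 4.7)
The approach is a direct power-series computation. Since $\lambda \odot y$ is a straight line from $0$ to $(\lambda_1,\ldots,\lambda_n)$, Proposition~\ref{prop:linearsignature} gives $S(\lambda\odot y)=\exp(\sum_{k=1}^n \lambda_k e_k)$, so each signature coefficient is the monomial
$$S(\lambda \odot y)^{(j_1,\ldots,j_m)} \;=\; \frac{\lambda_{j_1}\cdots\lambda_{j_m}}{m!}.$$
Substituting this into the definition of the signature kernel produces a power series in $\lambda$,
$$\mathbf{k}_{x^I,\,\lambda\odot y} \;=\; \sum_{m=0}^\infty \frac{1}{m!} \sum_{J\in\{1,\ldots,n\}^m} S(x^I)^J\,\lambda_{j_1}\cdots\lambda_{j_m}.$$

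Before differentiating I would argue that this series defines an entire function of $\lambda\in\mathbb{R}^n$, so that $\partial^n/(\partial\lambda_1\cdots\partial\lambda_n)$ can be passed inside the sum. Corollary~\ref{cor:cauchyfactorialdecay} bounds the $m$-th level by $\lVert x^I\rVert_{1}^m\,\lVert \lambda\odot y\rVert_{1}^m/(m!)^2$, and the resulting factorial decay dominates the series uniformly on every bounded set of $\lambda$, justifying term-by-term differentiation.

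Once differentiation is legalised I would perform a straightforward monomial analysis. The operator $\partial^n/(\partial\lambda_1\cdots\partial\lambda_n)$ applied to $\lambda_{j_1}\cdots\lambda_{j_m}$ and evaluated at $\lambda=0$ vanishes unless every $\lambda_k$ appears \emph{exactly} once in the monomial, forcing $m=n$ and $(j_1,\ldots,j_n)\in\mathcal{P}(1,\ldots,n)$; on these surviving terms the derivative equals $1$. Collecting the contributions,
$$\frac{\partial^n}{\partial\lambda_1\cdots\partial\lambda_n}\mathbf{k}_{x^I,\,\lambda\odot y}\bigg\rvert_{\lambda=0} \;=\; \frac{1}{n!}\sum_{J\in\mathcal{P}(1,\ldots,n)} S(x^I)^J.$$

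The final step is the combinatorial identification $S(x^I)^{(j_1,\ldots,j_n)} = S(x)^{(i_{j_1},\ldots,i_{j_n})}$, which is immediate from the iterated-integral definition applied to the reindexed path $x^I$. As $J$ traverses $\mathcal{P}(1,\ldots,n)$ the reordered multi-index $(i_{j_1},\ldots,i_{j_n})$ runs over the orbit of $I$ under $S_n$, so the sum collapses to $S(x)^{\mathcal{P}(I)}$ and the claim follows. The only step requiring any care is this last piece of bookkeeping — correctly matching permutations of $(1,\ldots,n)$ with the elements of $\mathcal{P}(I)$ enumerated by the signature coefficients; everything else is standard power-series manipulation backed by factorial decay.
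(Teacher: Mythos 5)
Your proof is correct and follows essentially the same route as the paper: expand the signature kernel level by level in $\lambda$ using $S(\lambda\odot y)=\exp(\sum_k\lambda_k e_k)$, observe that the mixed derivative at $\lambda=0$ kills every monomial except the degree-$n$ ones containing each $\lambda_k$ exactly once, and reindex. The only structural difference is that the paper first proves a small lemma computing $\partial^n_{\lambda_1\cdots\lambda_n}S(\lambda\odot z)^{(m)}\big\rvert_{\lambda=0}$ for an \emph{arbitrary} path $z$ and then specializes to the linear $y$ only at the last step (so the same lemma can be reused for Proposition~\ref{prop:order_isolation_exact} and Theorem~\ref{thm:semiordered}), whereas you invoke the explicit exponential form for $S(\lambda\odot y)$ from the outset; the computation itself is identical.

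One remark on the final bookkeeping step you yourself flag as delicate: when $I$ has repeated entries, the map $J\mapsto(i_{j_1},\ldots,i_{j_n})$ is a surjection from $\mathcal{P}(1,\ldots,n)$ onto $\mathcal{P}(I)$ with fibres of size $n!/|\mathcal{P}(I)|>1$, so
\[
\sum_{J\in\mathcal{P}(1,\ldots,n)}S(x^I)^J \;=\; \frac{n!}{|\mathcal{P}(I)|}\,S(x)^{\mathcal{P}(I)},
\]
not $S(x)^{\mathcal{P}(I)}$. Your write-up collapses the sum to $S(x)^{\mathcal{P}(I)}$ in exactly the way the paper's own proof does, implicitly assuming the entries of $I$ are distinct. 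This does not propagate to Theorem~\ref{thm:main}, where the axis path $z$ retains only the single permutation $J=(1,\ldots,n)$ so no overcounting can occur, but it is worth flagging if Proposition~\ref{thm:kderiv} is to be applied with repeated indices.
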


The proof follows from the fact that, for any path $z$ and multi-index $J = (j_1, \ldots, j_m)$,
\begin{equation*}
    S(\lambda \odot z)^{J} = S(z)^{J} \prod_{k=1}^m \lambda_{j_k}.
\end{equation*}
The full proof is detailed in Appendix \ref{proof:kderiv}.

\begin{remark}
    Let $E$ be the Banach space $E = \{v \in T((V)) : \lVert v \rVert < \infty\}$, and $S^{-1}(E) \subset C_1(V)$ be the pre-image of $E$ under the signature transform acting on $C_1(V)$. Then by considering the restriction of $S$ to $S^{-1}(E)$, we can restate the above result without relying on deterministic scalings by considering instead the Gateaux derivative
    \begin{equation*}
        D^n S(0)\{x^{(i_1)} e_1, \ldots, x^{(i_n)} e_n\},
    \end{equation*}
    where $x^{(i)} e_i$ is understood to be the path $(x^{(i)}_t e_i)_t \in C_1([0,1], V)$. Intuitively, we see that the only coefficients remaining after evaluating such a derivative are those of first order in $x^{(i_1)}, \ldots, x^{(i_n)}$, that is, precisely those given by multi-indices in $\mathcal{P}(I)$.
\end{remark}

\begin{notation}
    Throughout, given a function $f : \mathbb{R}^n \to \mathbb{R}$, we will denote by $D^{(n)}_\lambda \cdot \big\rvert_{\lambda = 0}$ any finite difference operator of the form
    \begin{equation} \label{eq:D_defn}
        D^{(n)}_\lambda f(\lambda) \Big\rvert_{\lambda = 0} = \sum_{\widetilde \lambda \in \Lambda} C_{\widetilde \lambda} f(\widetilde \lambda)
    \end{equation}
    approximating the derivative $\partial^n / \partial \lambda_1 \cdots \partial\lambda_n |_{\lambda = 0}$, such that $D^{(n)}_\lambda$ is exact on linear functions of $(\lambda_1, \ldots, \lambda_n)$. In particular, for some step-size $h>0$, $D^{(n)}_\lambda$ may be the forward difference
    \begin{equation} \label{eq:FD_defn}
        D^{(n)}_\lambda f(\lambda) \Big\rvert_{\lambda = 0} = FD^{(n)}_\lambda f(\lambda) \Big\rvert_{\lambda = 0} := \frac{1}{h^n} \sum_{\widetilde \lambda \in\{0,h\}} (-1)^{\text{sgn}(\widetilde \lambda)} f(\widetilde \lambda),
    \end{equation}
     where $\text{sgn}(\widetilde \lambda) = \sum_i \mathbf{1}\{\widetilde \lambda_i \neq h\}$, or the central difference
    \begin{equation} \label{eq:CD_defn}
        D^{(n)}_\lambda f(\lambda) \Big\rvert_{\lambda = 0} = CD^{(n)}_\lambda f(\lambda) \Big\rvert_{\lambda = 0} := \frac{1}{(2h)^n} \sum_{\widetilde \lambda \in\{-h,h\}} (-1)^{\text{sgn}(\widetilde \lambda)} f(\widetilde \lambda).
    \end{equation}
\end{notation}

As a consequence of Proposition \ref{thm:kderiv}, we obtain the finite difference approximation
\begin{equation} \label{eq:finite_diff}
    n! D^{(n)}_\lambda \mathbf{k}_{x^I, \lambda \odot y} \Big\rvert_{\lambda = 0} \approx S(x)^{\mathcal{P}(I)}.
\end{equation}

Since the signature kernel of $n$-dimensional paths can be computed as the solution of a PDE in $\mathcal{O}(Ln^2)$ time, the approximation in Equation \eqref{eq:finite_diff} has a serial computational complexity of $\mathcal{O}(Ln^2 2^n)$ for a $2^n$-term finite difference operator such as the forward or central difference. However, computing cross-derivatives of high order is a notoriously difficult numerical task and often unstable as the step size tends to zero. In the sections that follow, we present two approaches to mitigate this instability: an exact approach using truncated signature kernels, and an untruncated approximation benefiting from simpler computation.

\subsection{A Truncated Kernel Approach}

It is easy to see that the approximation error in Equation \ref{eq:finite_diff} is accumulated from levels of the signature deeper than the $n^{th}$. As a result, it can be avoided completely by considering truncated kernels instead.

\begin{proposition} \label{prop:truncatedkernel}
    Let $x \in C_1([0,1], \mathbb{R}^d)$ and let $y \in C_1([0,1], \mathbb{R}^n)$ be the linear path $y_t = t \mathbf{e}$. Then
    \begin{equation}
            n! D^{(n)}_\lambda \mathbf{k}^n_{x^{I}, \hspace{1mm} \lambda \odot y} \Big\rvert_{\lambda = 0} = S(x)^{\mathcal{P}(I)},
    \end{equation}
    
    where $\mathbf{k}^n$ is the truncated signature kernel up to level $n$.
\end{proposition}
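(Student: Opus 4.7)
The plan is to reduce the statement to Proposition~\ref{thm:kderiv} by showing that, on the truncated kernel, the finite difference operator $D^{(n)}_\lambda$ coincides exactly with the cross partial $\partial^n/\partial\lambda_1\cdots\partial\lambda_n$ at $\lambda = 0$. The key structural fact is that, unlike the full kernel, the truncated kernel is a polynomial in $\lambda$ of total degree at most $n$, which makes finite difference approximations exact rather than merely asymptotic.

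First I would expand the truncated kernel explicitly. Since $y_t = t\mathbf{e}$ is linear with increment $\mathbf{e}$, Proposition~\ref{prop:linearsignature} gives $S(y)^J = 1/k!$ for every multi-index $J$ of length $k$, and combined with the scaling identity $S(\lambda \odot y)^J = S(y)^J \prod_i \lambda_{j_i}$ this yields
\begin{equation*}
    \mathbf{k}^n_{x^I,\,\lambda\odot y} \;=\; \sum_{k=0}^n \frac{1}{k!} \sum_{|J|=k} S(x^I)^J \prod_{i=1}^k \lambda_{j_i}.
\end{equation*}
Every monomial appearing has total $\lambda$-degree at most $n$; for $k<n$ the monomial necessarily omits some variable $\lambda_m$; for $k=n$ the monomial equals $\lambda_1\cdots\lambda_n$ precisely when $J$ is a permutation of $(1,\ldots,n)$, and otherwise still omits some variable.

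Next I would verify monomial by monomial that $D^{(n)}_\lambda$ and $\partial^n/\partial\lambda_1\cdots\partial\lambda_n$ agree at $\lambda=0$ on each term. Both operators vanish on any monomial missing some $\lambda_m$: the true derivative trivially, and $D^{(n)}_\lambda$ because it is exact on functions that are constant in some coordinate (this follows from its linear-exactness assumption applied in each variable separately). On the unique surviving monomial $\lambda_1\cdots\lambda_n$, both operators return $1$. Summing the surviving contributions — only the $k=n$ terms indexed by permutations of $(1,\ldots,n)$ — gives
\begin{equation*}
    D^{(n)}_\lambda \mathbf{k}^n_{x^I,\,\lambda\odot y}\Big|_{\lambda=0} \;=\; \frac{1}{n!} \sum_{J\in\mathcal{P}(1,\ldots,n)} S(x^I)^J \;=\; \frac{1}{n!}\, S(x)^{\mathcal{P}(I)},
\end{equation*}
and multiplying through by $n!$ yields the claim. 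An equivalent bird's-eye view is that the true cross-partial of the full kernel and of the truncated kernel agree at $\lambda=0$ (levels $k>n$ produce monomials of degree $>n$ whose $n$-fold cross-derivative vanishes at the origin), and $D^{(n)}_\lambda$ equals the true cross-partial on the degree-$n$ polynomial $\mathbf{k}^n$; Proposition~\ref{thm:kderiv} then closes the chain.

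The main technical subtlety is the exactness of $D^{(n)}_\lambda$ on monomials that omit some variable but have higher powers in others, such as $\lambda_1^2$ when $n \ge 2$; the stated "exactness on linear functions" needs to be parsed carefully. For the forward and central differences of \eqref{eq:FD_defn}--\eqref{eq:CD_defn} this is automatic from the tensor-product factorisation $D^{(n)}_\lambda = \prod_i D^{(1)}_{\lambda_i}$, since any factor applied to a function constant in $\lambda_i$ returns zero. For a general $D^{(n)}_\lambda$ one would need to either extract an analogous separability property or enlarge the hypothesis to exactness on all multilinear polynomials; this is the only step that is not a routine bookkeeping consequence of Proposition~\ref{thm:kderiv}.
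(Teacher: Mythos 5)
Your proof is correct and takes essentially the same route as the paper, which simply asserts that the result "follows immediately from Proposition~\ref{thm:kderiv} and the definition of $D^{(n)}_\lambda$." You supply the details the paper leaves implicit: the truncated kernel is a polynomial in $\lambda$ of total degree $\leq n$, the only degree-$n$ monomial with full support is $\lambda_1\cdots\lambda_n$, and every other monomial (lower degree or missing a variable) is killed by both the true cross-partial at $0$ and by $D^{(n)}_\lambda$ when the latter tensor-factorizes.

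The one point worth flagging, which you already flag yourself, is that the paper's stated hypothesis on $D^{(n)}_\lambda$ --- exactness on "linear functions of $(\lambda_1,\ldots,\lambda_n)$" --- is too weak if read as exactness on affine functions only: it does not by itself force $D^{(n)}_\lambda$ to annihilate a monomial such as $\lambda_1^2$ that omits $\lambda_2,\ldots,\lambda_n$ but has degree $>1$ in a single variable. Your reading is the right repair: for the forward and central differences of \eqref{eq:FD_defn}--\eqref{eq:CD_defn}, the factorization $D^{(n)}_\lambda = \prod_i D^{(1)}_{\lambda_i}$ gives vanishing on any function constant in some coordinate, and exactness of each $D^{(1)}_{\lambda_i}$ on linear-in-$\lambda_i$ functions then yields $D^{(n)}_\lambda(\lambda_1\cdots\lambda_n)=1$. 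For a general $D^{(n)}_\lambda$ as allowed by \eqref{eq:D_defn} one would indeed need to strengthen the hypothesis to, say, agreement with $\partial^n/\partial\lambda_1\cdots\partial\lambda_n$ on all polynomials of total degree $\leq n$ (or a separability/annihilation condition equivalent to it), since the truncated kernel is not multilinear. This is a genuine imprecision in the paper's Notation block rather than a flaw in your argument, and your account of it is accurate.
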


\begin{proof}
    The result follows immediately from Proposition \ref{thm:kderiv} and the definition of $D^{(n)}_\lambda$.
\end{proof}

In particular, we may fix the step size of the finite difference to be large in order to restrict numerical instabilities associated with small values of $h$. Efficient approaches for computing truncated signature kernels for a linearly interpolated data stream of length $L$ are presented in \cite{kiraly2019kernels}, although these are typically non-linear in $L$ or reliant on low-rank approximations. In light of this, we may prefer to approximate the result of Proposition \ref{prop:truncatedkernel} using untruncated kernels, by finding a different way of zeroing higher levels of the signature.

\subsection{Vandermonde Systems of Kernels}

For convenience, given $\lambda \in \mathbb{R}^d$ and $\beta \in \mathbb{R}^+$ we define the $\lambda \mhyphen \beta$ signature kernel as
\begin{equation*}
    \mathbf{k}^{\lambda \mhyphen \beta}_{x,y}(s,t) = \mathbf{k}_{x, \hspace{1mm} \beta \lambda \odot y}(s,t) = \mathbf{k}_{\beta \lambda \odot x, \hspace{1mm} y}(s,t),
\end{equation*}

and the corresponding truncated kernel as
\begin{equation*}
    \mathbf{k}^{\lambda \mhyphen \beta, n}_{x,y}(s,t) = \mathbf{k}^n_{x, \hspace{1mm} \beta \lambda \odot y}(s,t) = \mathbf{k}^n_{\beta \lambda \odot x, \hspace{1mm} y}(s,t).
\end{equation*}

Proposition \ref{prop:truncatedkernel} can then be rewritten as

\begin{equation} \label{eq:vandermonde_sum_omega_phi}
        n! D^{(n)}_\lambda \mathbf{k}^{\lambda \mhyphen 1, n}_{x^I,y} \Big\rvert_{\lambda = 0} = S(x)^{\mathcal{P}(I)}.
\end{equation}

Whilst this is an exact form for $S(x)^{\mathcal{P}(I)}$ in terms of truncated kernels, one might prefer to consider untruncated kernels as these are easier to compute as PDE solutions. If we could zero the first few levels of the signature after the $n^{th}$, we may then rely on the factorial decay of signature coefficients to minimize the error from higher levels. To do this, consider a linear combination of kernels
\begin{equation*}
\sum_{i=0}^M \alpha_i \mathbf{k}^{\lambda \mhyphen \beta_i}_{x^I,y}
\end{equation*}

for some $\alpha_i, \beta_i \in \mathbb{R}$, where $M \geq 0$ is a parameter which we will refer to as the \textit{depth} of the scaling. Suppose we wish to solve for $\alpha_i, \beta_i$ such that the resulting sum of kernels preserves the $n^{th}$ level of the signature, whilst zeroing levels $n+1, \ldots, n+M$. For levels deeper than $n+M$, we rely on the factorial decay in Lemma \ref{thm:factorialdecay} to keep the error low. Then we require that
\begin{equation*}
    \sum_{i=0}^M \alpha_i \beta_i^m = \delta_{m,n}, \quad \forall n\leq m \leq n+M.
\end{equation*}
\par\medskip

We may rewrite the above equations as the Vandermonde matrix equation
\begin{equation} \label{eq:vandermonde}
B_{n,M} \cdot \alpha = 
\begin{pmatrix}
\beta_0^n & \beta_1^n & \cdots & \beta_M^n\\
\beta_0^{n+1} & \beta_1^{n+1} & \cdots & \beta_M^{n+1}\\
\vdots & \vdots & \ddots & \vdots\\
\beta_0^{n+M} & \beta_1^{n+M} & \cdots & \beta_M^{n+M}\\
\end{pmatrix}
\begin{pmatrix}
\alpha_0 \\ \alpha_1 \\ \vdots \\ \alpha_M
\end{pmatrix}
=
\begin{pmatrix}
1 \\ 0 \\ \vdots \\ 0
\end{pmatrix}.
\end{equation}
\par\medskip

Such generalized Vandermonde matrices $B_{n,M}$ are invertible for distinct $\beta_i>0$. An explicit inversion is given by the following.

\begin{proposition} \label{prop:vandermonde_inversion}
    For distinct $\beta_i > 0$, an explicit solution to Equation \eqref{eq:vandermonde} is given by
    \begin{equation*}
        \alpha_i = \frac{(-1)^M}{\beta_i^n} \prod_{\substack{j = 0 \\ j \neq i}}^M \frac{\beta_j}{\beta_i - \beta_j}.
    \end{equation*}
\end{proposition}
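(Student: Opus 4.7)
The plan is to reduce the system \eqref{eq:vandermonde} to a standard Vandermonde inversion and then identify the solution via Lagrange interpolation. Concretely, I would begin by factoring $B_{n,M} = V \cdot D$, where $V$ is the standard Vandermonde matrix with entries $V_{m,i} = \beta_i^m$ and $D = \diag(\beta_0^n, \ldots, \beta_M^n)$. Since the $\beta_i$ are positive and distinct, both factors are invertible, and solving $B_{n,M}\alpha = (1,0,\ldots,0)^\top$ becomes equivalent to solving $V c = (1,0,\ldots,0)^\top$ for $c = D\alpha$.

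To solve this reduced system, I would invoke the Lagrange basis polynomials
\begin{equation*}
\ell_i(x) = \prod_{\substack{j=0 \\ j \neq i}}^M \frac{x - \beta_j}{\beta_i - \beta_j}
\end{equation*}
at the nodes $\beta_0, \ldots, \beta_M$, which satisfy $\ell_i(\beta_k) = \delta_{ik}$. For each $m \in \{0, 1, \ldots, M\}$, the monomial $x^m$ has degree at most $M$ and therefore coincides with its own Lagrange interpolant, giving the identity $x^m = \sum_{i=0}^M \beta_i^m \ell_i(x)$. Evaluating at $x = 0$ yields $\sum_{i=0}^M \ell_i(0) \beta_i^m = \delta_{m,0}$, which is precisely the statement $V c = (1,0,\ldots,0)^\top$ with $c_i = \ell_i(0)$.

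A direct expansion gives $\ell_i(0) = \prod_{j \neq i}(-\beta_j)/(\beta_i - \beta_j) = (-1)^M \prod_{j \neq i}\beta_j/(\beta_i - \beta_j)$, and applying $\alpha = D^{-1} c$ divides through by $\beta_i^n$ to recover the claimed formula.

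There is no genuine obstacle here: the statement is essentially a compact repackaging of the Lagrange interpolation identity, and the only care needed is to track the invertibility hypotheses (distinctness and positivity of the $\beta_i$) through the factorization $B_{n,M} = VD$. An equally clean alternative would skip the factorization entirely and simply substitute the candidate formula into \eqref{eq:vandermonde}, verifying $\sum_i \beta_i^{n+m} \alpha_i = \delta_{m,0}$ for $0 \leq m \leq M$ from the same Lagrange identity, which effectively means checking that the Lagrange interpolant of $x^m$ at the nodes $\beta_0, \ldots, \beta_M$ agrees with $x^m$ at $x=0$.
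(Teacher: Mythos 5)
Your proof is correct, and it takes a genuinely different route from the paper's. The paper invokes \cite{arafat2022fast} and manipulates the elementary symmetric polynomial expressions for the inverse of a generalized Vandermonde matrix, effectively outsourcing the key identity to that reference. You instead observe the clean factorization $B_{n,M} = V \cdot \diag(\beta_0^n, \ldots, \beta_M^n)$ with $V$ a \emph{standard} Vandermonde matrix, reducing the problem to inverting the first column of $V$, which you then read off from the Lagrange interpolation identity $x^m = \sum_i \beta_i^m \ell_i(x)$ (valid for $0 \le m \le M$ since $x^m$ has degree $\le M$) evaluated at $x = 0$. Both arguments are sound, and they are of course closely related -- the Lagrange basis coefficients are exactly the elementary-symmetric-polynomial expressions appearing in the cited formula -- but your version is self-contained, avoids importing outside notation, and makes transparent \emph{why} the formula holds: it is nothing but $\ell_i(0)/\beta_i^n$. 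The diagonal factorization also makes the role of the positivity/distinctness hypotheses explicit (each factor is invertible under those hypotheses), whereas the paper leans on the invertibility of the generalized Vandermonde as a black box. Your final remark, that one could skip the factorization and directly verify $\sum_i \beta_i^{n+m}\alpha_i = \delta_{m,0}$ from the same interpolation identity, is also correct and is arguably the shortest possible argument.
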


\begin{proof}
    See \cite{arafat2022fast} and Appendix \ref{proof:vandermonde_inversion}.
\end{proof}

Note that if any of the $\beta_i$ are chosen greater than $1$, there will be signature coefficients in levels beyond the $(n+M)^{th}$ which are scaled by a high power of $\beta_i$, which may cause high error. To exclude this possible source of error, the $\beta_i$ should be chosen in $(0,1]$ to ensure that $\beta_i^m$ does not grow as $m \to \infty$. Beyond this, the method is not particularly sensitive to the choice of $\beta_i$ as long as these are chosen reasonably, as per the conditions of Proposition \ref{prop:vandermonde} below. Combining this scaling with Equation \eqref{eq:finite_diff}, we get:

\begin{proposition} \label{prop:vandermonde}
    Fix $n\geq 1$ and let $\beta_i = \beta_i(M) \in (0,1]$ be chosen such that for any positive constant $C > 0$,
    \begin{equation} \label{eq:alphacond}
        \max_{0 \leq i \leq M} |\alpha_i| = \max_{0 \leq i \leq M} \frac{1}{\beta_i^n} \prod_{\substack{j = 0 \\ j \neq i}}^M \frac{\beta_j}{|\beta_i - \beta_j|} = \mathcal{O}\left(\frac{[(n+M)!]^2}{M} C^{M} \right)
    \end{equation}
    as $M \to \infty$. Let $y \in C_1([0,1], \mathbb{R}^n)$ be the linear path $y_t = t \mathbf{e}$. Then for any $x \in C_1([0,1], \mathbb{R}^d)$,
    \begin{equation} \label{eq:vandermonde_sum}
        n! \sum_{i=0}^M \alpha_i D^{(n)}_\lambda \mathbf{k}^{\lambda \mhyphen \beta_i}_{x^I, y} \Big\rvert_{\lambda = 0} \to S(x)^{\mathcal{P}(I)}
    \end{equation}
    
    as $M \to \infty$.
\end{proposition}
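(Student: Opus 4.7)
The plan is to expand each kernel $\mathbf{k}^{\lambda\mhyphen\beta_i}_{x^I, y}$ levelwise, interchange the finite difference $D^{(n)}_\lambda$ and the outer sum $\sum_{i=0}^M \alpha_i$ with the series in $m$ (permitted by the absolute convergence that follows from Corollary~\ref{cor:cauchyfactorialdecay}), and then use the Vandermonde system~\eqref{eq:vandermonde} to annihilate every level other than the $n$-th up to level $n+M$. Concretely, since scaling $\lambda \odot y$ by $\beta_i$ multiplies the $m$-th level of its signature by $\beta_i^m$, I would write $\mathbf{k}^{\lambda\mhyphen\beta_i}_{x^I, y} = \sum_{m=0}^\infty \beta_i^m A_m(\lambda)$ with $A_m(\lambda) := \langle S(x^I)^{(m)}, S(\lambda \odot y)^{(m)}\rangle$, yielding
\begin{equation*}
    \sum_{i=0}^M \alpha_i D^{(n)}_\lambda \mathbf{k}^{\lambda\mhyphen\beta_i}_{x^I, y}\Big\rvert_{\lambda = 0} \;=\; \sum_{m=0}^\infty \Big(\sum_{i=0}^M \alpha_i \beta_i^m\Big) D^{(n)}_\lambda A_m(\lambda)\Big\rvert_{\lambda = 0}.
\end{equation*}

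Next I would dispatch the individual levels. For $m < n$, the polynomial $A_m(\lambda)$ has total degree $m < n$, so every monomial must be independent of at least one of $\lambda_1, \ldots, \lambda_n$; both the forward~\eqref{eq:FD_defn} and central~\eqref{eq:CD_defn} differences factor as tensor products of one-dimensional finite differences that annihilate constants, so $D^{(n)}_\lambda A_m|_{\lambda = 0} = 0$ in this range. For $m = n$, the argument already used for Proposition~\ref{thm:kderiv} gives $D^{(n)}_\lambda A_n|_{\lambda = 0} = S(x)^{\mathcal{P}(I)}/n!$, and the first row of~\eqref{eq:vandermonde} forces $\sum_i \alpha_i \beta_i^n = 1$. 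For $n < m \leq n+M$, the remaining rows of~\eqref{eq:vandermonde} give $\sum_i \alpha_i \beta_i^m = 0$. After multiplying through by $n!$, the partial sum over $m \leq n+M$ reproduces $S(x)^{\mathcal{P}(I)}$ exactly, so the claim reduces to showing that the residual tail
\begin{equation*}
    R_M \;:=\; n! \sum_{m > n+M} \Big(\sum_{i=0}^M \alpha_i \beta_i^m\Big) D^{(n)}_\lambda A_m(\lambda)\Big\rvert_{\lambda = 0}
\end{equation*}
vanishes as $M \to \infty$.

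Controlling $R_M$ is the main obstacle. Using $\beta_i \in (0,1]$ I would estimate $|\sum_i \alpha_i \beta_i^m| \leq (M+1) \max_i |\alpha_i|$, and Corollary~\ref{cor:cauchyfactorialdecay}, together with the fact that $D^{(n)}_\lambda$ is a finite weighted sum of evaluations at $\lambda$'s in a bounded set depending only on $n$ and the step size $h$, yields
\begin{equation*}
    \Bigl| D^{(n)}_\lambda A_m(\lambda)\big\rvert_{\lambda = 0} \Bigr| \;\leq\; K \, \frac{D^m}{(m!)^2}
\end{equation*}
for constants $K, D > 0$ independent of $m$ and $M$. The tail sum $\sum_{m > n+M} D^m/(m!)^2$ is dominated by its leading term $D^{n+M+1}/((n+M+1)!)^2$, giving $|R_M| = \mathcal{O}\bigl( \max_i|\alpha_i| \cdot D^{n+M+1}/((n+M+1)!)^2 \bigr)$.

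Finally, because hypothesis~\eqref{eq:alphacond} is assumed for \emph{every} positive constant $C$, I may choose $C$ so small that $CD < 1$. Substituting $\max_i|\alpha_i| = \mathcal{O}\bigl([(n+M)!]^2 C^M / M\bigr)$ and noting that $\bigl((n+M)!/(n+M+1)!\bigr)^2 = 1/(n+M+1)^2$ produces $|R_M| = \mathcal{O}\bigl((CD)^M/(M(n+M+1)^2)\bigr) \to 0$. The subtle point of the argument is exactly this balance: the coefficients $|\alpha_i|$ needed to invert the generalized Vandermonde matrix can grow super-exponentially with $M$, and only the doubly-factorial decay of the signature kernel at deep levels, combined with the freedom to pick $C$ arbitrarily small in~\eqref{eq:alphacond}, is strong enough to drive the tail to zero.
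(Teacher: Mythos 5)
Your proof is correct and follows essentially the same route as the paper's: decompose the kernels by signature level, observe (via the degree argument and the Vandermonde system) that the partial sum over levels $0,\dots,n+M$ reproduces $S(x)^{\mathcal P(I)}$ exactly so that the error is precisely the tail over levels $j>n+M$, then bound the tail with Corollary~\ref{cor:cauchyfactorialdecay} and the hypothesis~\eqref{eq:alphacond} (with $C$ taken small enough to dominate the geometric factor coming from $\lVert x^I\rVert_1$ and the finite-difference evaluation points). The paper packages the level dispatch into a citation to Proposition~\ref{prop:truncatedkernel} and writes the tail bound slightly differently (pulling out $(\lVert x^I\rVert_1\ell_{\max})^{n+M}/((n+M)!)^2$ and bounding the remainder by a Bessel-type series), but the underlying estimates and the role of the freedom in $C$ are identical to yours.
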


\begin{proof}
    See Appendix \ref{proof:vandermonde}.
\end{proof}

\begin{example}
    The uniform choice $\beta_i = (i+1)/(M+1)$ satisfies the above condition since
    \begin{align*}
        \max_{0\leq i \leq M} |\alpha_i| &= \max_{0\leq i \leq M} \frac{1}{\beta_i^n} \prod_{\substack{j=0 \\ j\neq i}}^M \frac{\beta_j}{|\beta_i - \beta_j|} \\
        &= \max_{0\leq i \leq M} \left(\frac{M+1}{i+1}\right)^n \prod_{\substack{j=0 \\ j\neq i}}^M \frac{j+1}{|i-j|} \\
        &= \max_{0\leq i \leq M} \left(\frac{M+1}{i+1}\right)^{n+1} \genfrac{(}{)}{0pt}{}{M}{i}\\
        &= \mathcal{O}(M^{n+1} 2^M).
    \end{align*}
\end{example}

In practice, when $n$ is large we can choose the $\beta_i$ such that their $n^{th}$ powers are uniform to minimize numerical errors associated with overly strong path scalings and a blow-up of the $1/\beta_i^n$ factor in $\alpha_i$. That is, we choose $\beta_i = [(i+1)/(M+1)]^{1/n}$. This can be shown to satisfy Condition \eqref{eq:alphacond} for all $n$. The convergence in $M$ is quick, as suggested by the following proposition.\par\medskip

\begin{proposition}\label{prop:M_bound}
    Let $\beta_i = [(i+1)/(M+1)]^{1/n}$. For a fixed $n \geq 1$, there exists constants $A > 0$ dependent on $\lVert x^I \rVert_1$ and $n$, and $B > 0$ dependent on $n$, such that
    \begin{equation} \label{eq:M_bound}
        \left\lVert n! \sum_{i=0}^M \alpha_i D^{(n)}_\lambda \mathbf{k}^{\lambda \mhyphen \beta_i}_{x^I, y} \Big\rvert_{\lambda = 0} - S(x)^{\mathcal{P}(I)}\right \rVert \leq A (M+1)^2 B^M \frac{ \left\lVert x^I \right \rVert_1^{n+M}}{((n+M)!)^2}.
    \end{equation}
\end{proposition}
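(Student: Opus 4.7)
The plan is to expand the kernel difference as a series over signature levels, identify the level at which cancellation from the Vandermonde conditions fails, and bound the surviving tail using the factorial decay of signatures together with a bound on $\max_i |\alpha_i|$ obtained from Proposition \ref{prop:vandermonde_inversion}.

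First I would use that $S(y)^J = 1/m!$ for $|J|=m$ (since $y_t = t\mathbf{e}$) to expand
\[
\mathbf{k}^{\lambda \mhyphen \beta_i}_{x^I, y} = \sum_{m=0}^\infty \frac{\beta_i^m}{m!} \sum_{|J| = m} S(x^I)^J \prod_{k=1}^m \lambda_{j_k}.
\]
Because $D^{(n)}_\lambda$ is exact on multilinear polynomials and annihilates any polynomial that fails to involve all of $\lambda_1,\ldots,\lambda_n$ (both being consequences of the tensor-grid structure of the forward and central stencils together with its exactness on linear functions), the levels $m<n$ contribute zero, and for $m \geq n$ only multi-indices $J$ whose entries cover $\{1,\ldots,n\}$ survive. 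Writing $R_m$ for the surviving contribution, Proposition \ref{thm:kderiv} identifies $R_n = S(x)^{\mathcal{P}(I)}$, so that
\[
n!\sum_{i=0}^M \alpha_i D^{(n)}_\lambda \mathbf{k}^{\lambda \mhyphen \beta_i}_{x^I, y}\Big\rvert_{\lambda=0} = \sum_{m \geq n} \frac{n!}{m!}\Big(\sum_i \alpha_i \beta_i^m\Big) R_m = S(x)^{\mathcal{P}(I)} + n! \sum_{m > n+M} \frac{R_m}{m!} \sum_i \alpha_i \beta_i^m,
\]
where the middle block $n < m \leq n+M$ cancels exactly by the Vandermonde conditions $\sum_i \alpha_i \beta_i^m = \delta_{m,n}$.

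Next I would bound the tail. A crude Cauchy-Schwarz estimate together with Lemma \ref{thm:factorialdecay} and the bound $|\mathcal{J}_{m,n}| \leq n^m$ on the number of surjective multi-indices yields $|R_m| \leq (c(n)\lVert x^I\rVert_1)^m/m!$ for a constant $c(n)$ absorbing the dependence on $n$ and the finite-difference scheme. Since each $\beta_i \in (0,1]$, one has $|\sum_i \alpha_i \beta_i^m| \leq (M+1)\max_i |\alpha_i|$, and the tail $\sum_{m > n+M}(c(n)\lVert x^I\rVert_1)^m/(m!)^2$ is geometrically dominated by its first term, giving a factor of order $\lVert x^I\rVert_1^{n+M}/((n+M)!)^2$ after absorbing the $n$-dependent parts into $A$ and the growing factor $c(n)^M$ into $B$.

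The main obstacle is establishing $\max_i |\alpha_i| \leq \mathrm{poly}(M)\cdot B(n)^M$ for the specific choice $\beta_i = ((i+1)/(M+1))^{1/n}$. By Proposition \ref{prop:vandermonde_inversion}, $|\alpha_i| = \beta_i^{-n}\prod_{j \neq i} \beta_j/|\beta_i - \beta_j|$, and the difficulty lies in controlling the product of differences of fractional powers. Setting $u_i = (i+1)/(M+1)$ and using the factorization $u_i^{1/n} - u_j^{1/n} = (u_i - u_j)/P_n(u_i, u_j)$, with $P_n$ a symmetric sum of $n$ monomials bounded uniformly above and below on $[1/(M+1), 1]^2$ by $n$-dependent constants, reduces the estimate to the uniform-grid Vandermonde product $\prod_{j \neq i} |i - j|^{-1} = \binom{M}{i}/M! \leq 2^M/M!$ already encountered in the preceding example, yielding exponential control up to $n$-dependent constants. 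Combining this with the tail bound and folding remaining polynomial factors into the constants then recovers the stated inequality $A(M+1)^2 B^M \lVert x^I\rVert_1^{n+M}/((n+M)!)^2$.
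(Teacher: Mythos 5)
Your proof follows the same overall outline as the paper's: isolate the exact cancellation on levels $n < m \leq n+M$ coming from the Vandermonde conditions $\sum_i \alpha_i \beta_i^m = \delta_{m,n}$, bound the remaining tail using Corollary \ref{cor:cauchyfactorialdecay} and a Bessel-type series estimate, and control $\max_i|\alpha_i|$ via Proposition \ref{prop:vandermonde_inversion}. The one place where you genuinely diverge is the last step, and your route there is more careful than — and, importantly, corrects an error in — the paper's. The paper bounds $\prod_{j\neq i}|1-((j+1)/(i+1))^{1/n}|^{-1}$ by the $M$th power of its largest factor and then asserts the resulting expression is maximized at $i=0$, concluding $\max_i|\alpha_i|\leq(M+1)(2^{1/n}-1)^{-M}$. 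That maximization claim is false: already for $n=1$ it would give $\max_i|\alpha_i|\leq M+1$, whereas the example preceding Proposition \ref{prop:vandermonde} shows $\max_i|\alpha_i|=\max_i(\tfrac{M+1}{i+1})^2\binom{M}{i}\sim M^{3/2}2^M$. Your approach — writing $u_i^{1/n}-u_j^{1/n}=(u_i-u_j)/P_n(u_i,u_j)$ with $P_n(u,v)=\sum_{k=0}^{n-1}u^{k/n}v^{(n-1-k)/n}$, so that $\beta_j/|\beta_i-\beta_j|\leq n(M+1)/|i-j|$, then using $\prod_{j\neq i}|i-j|^{-1}=\binom{M}{i}/M!\leq 2^M/M!$ and Stirling — yields $\max_i|\alpha_i|\lesssim(M+1)(2ne)^M$, which is what is actually needed and still has the form $\mathrm{poly}(M)\cdot B(n)^M$.

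Two small imprecisions in your write-up, neither fatal. First, $P_n(u_i,u_j)$ is bounded above by $n$ on $[1/(M+1),1]^2$, but it is \emph{not} bounded below uniformly in $M$ by an $n$-dependent constant: at $u_i=u_j=1/(M+1)$ it equals $n/(M+1)^{(n-1)/n}\to 0$. Fortunately only the upper bound $u_j^{1/n}P_n(u_i,u_j)\leq n$ enters, since you estimate $|\alpha_i|$ from above. Second, your statement that $D^{(n)}_\lambda$ "annihilates any polynomial that fails to involve all of $\lambda_1,\ldots,\lambda_n$" is a consequence of the tensor-grid structure of the forward and central stencils and not of exactness on linear functions in general; the paper carries the same implicit dependence (Proposition \ref{prop:truncatedkernel} is only proved for stencils with this product form), so this is a shared, not an introduced, gap.
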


\begin{proof}
    See Appendix \ref{proof:M_bound}.
\end{proof}

\begin{remark}
    When evaluating sums such as \eqref{eq:vandermonde_sum} numerically, the error can be reduced by subtracting $\mathit{1}$ from every kernel, thus effectively only considering level $\mathit{1}$ and above of the signature. It is easy to see that this change does not affect the result, but may significantly reduce the magnitude of each term of the sum and stabilise the scheme.
\end{remark}

The bound \eqref{eq:M_bound} successfully captures the error pattern produced by the Vandermonde approximation in practice, as we will see later (see Figures \ref{fig:depth_error} and \ref{fig:M_error}). Specifically, as $n$ increases, extracting the signature becomes progressively more challenging, requiring more kernel evaluations, increasing the error. At the same time, factorial decay of coefficients ensures that the residual levels of the signature which are not zeroed by the Vandermonde approximation are less significant. This results in a humped structure of the absolute error with respect to the depth of the coefficient, $n$. In addition, the bound captures the quick decay of error with respect to $M$. In practice, a choice of $M = 1$ or $2$ is usually sufficient. \par\medskip

For a $2^n$-term finite-difference operator such as forward or central difference, the resulting algorithm involves $(M+1)2^n$ kernel evaluations, giving it a computational complexity of $\mathcal{O}(LnM2^n)$. Indeed, due to the $1/((n+M)!)^2$ factor in \eqref{eq:M_bound}, $M$ is only particularly relevant for small values of $n$ and can be set to $0$ for $n$ sufficiently large. As such, we do not include it in any reasoning about computational complexity from now on.

\section{Order Isolation} \label{chapter:order}

If we replace the term $\mathbf{k}_{\hspace{0.5mm} x^I, \hspace{0.5mm} \lambda \odot y}$ in Proposition \ref{thm:kderiv} by $S(\lambda \odot x^I)$, we can view the derivative component-wise as
\begin{equation*}
    \frac{\partial^n}{\partial \lambda_1 \cdots \partial \lambda_n} S(\lambda \odot x^I)^J \hspace{0.5mm} \biggr\rvert_{\lambda = 0} \hspace{1mm} = \begin{cases}
        S(x^I)^J, &J \in \mathcal{P}(1,\ldots, n),\\
        0, &J \notin \mathcal{P}(1,\ldots, n).
        \end{cases}
\end{equation*}

Taking kernels with the linear path $y$, whose signature is given by
\begin{equation*}
    S(y)^J = |J|!
\end{equation*}
extracts the sum $S(x)^{\mathcal{P}(I)}$, weighted by $n!$. Similarly, if we replace the term $\mathbf{k}_{x^I, y}^{\lambda \mhyphen \beta_i}$ in Proposition \ref{prop:vandermonde} by $S(\beta_i \lambda \odot x^I)$, we recover the component-wise limit
\begin{equation}
    \sum_{i=0}^M \alpha_i D^{(n)}_\lambda S(\beta_i \lambda \odot x^I) \Big\rvert_{\lambda = 0} \to Z \in T((\mathbb{R}^n))
\end{equation}
where $Z$ is such that for all multi-indices $J = (j_1, \ldots, j_m)$ with $m \leq n$, $Z^J = 1$ if $J \in \mathcal{P}(1,\ldots, n)$ and $Z^J = 0$ otherwise. If we wish to extract just the coefficient $S(x)^I$, we must therefore replace $y$ with a path $z$, whose signature satisfies
\begin{equation} \label{eq:z_cond}
    S(z)^J = \begin{cases}
        1, &J = (1,\ldots, n),\\
        0, &J \in \mathcal{P}(1,\ldots, n), J \neq (1,\ldots, n).
    \end{cases}
\end{equation}
It is simple to see, using Chen's relation, that the axis path given by $z_t = (e_1 * e_2 * \cdots * e_n)_t$ exactly satisfies Condition \eqref{eq:z_cond}. This leads us to the following adaptation of Proposition \ref{thm:kderiv}, as well as our main result, Theorem \ref{thm:main}.

\begin{proposition} \label{prop:order_isolation_exact}
Let $x \in C_1([0,1], \mathbb{R}^d)$ and let $z$ denote the axis path given by $z_t = (e_1 * e_2 * \cdots * e_n)_t$ for $t \in [0,1]$, where $(e_i)_t$ is understood to be the linear path from $\mathbf{0}$ to $e_i$. Then
    \begin{equation}
        \frac{\partial^n}{\partial \lambda_1 \cdots \partial \lambda_n} \mathbf{k}_{\hspace{0.5mm} x^I, \hspace{0.5mm} \lambda \odot z} \hspace{0.5mm} \biggr\rvert_{\lambda = 0} \hspace{1mm} = \hspace{1mm} S(x)^I.
    \end{equation}
\end{proposition}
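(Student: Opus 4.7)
The plan is to run the same differentiation argument that underlies Proposition \ref{thm:kderiv}, but relying on the more selective filtering provided by the axis path $z$. First I would expand the kernel using the factorization $S(\lambda \odot z)^J = S(z)^J \prod_{k=1}^{|J|} \lambda_{j_k}$ already exploited in the anagram case:
\begin{equation*}
    \mathbf{k}_{x^I, \lambda \odot z} = \sum_{J} S(x^I)^J \, S(\lambda \odot z)^J = \sum_J S(x^I)^J \, S(z)^J \prod_{k=1}^{|J|} \lambda_{j_k}.
\end{equation*}
The interchange of differentiation and summation is justified by the factorial decay of Lemma \ref{thm:factorialdecay}, which dominates the monomials uniformly for $\lambda$ in any bounded neighbourhood of $0$.

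Next, I would apply $\partial^n / \partial \lambda_1 \cdots \partial \lambda_n$ and evaluate at $\lambda = 0$. The monomial $\prod_k \lambda_{j_k}$ survives both the differentiation and the evaluation precisely when each of $\lambda_1, \ldots, \lambda_n$ appears exactly once, that is, when $J \in \mathcal{P}(1, \ldots, n)$, in which case the derivative equals $1$. This collapses the sum to
\begin{equation*}
    \frac{\partial^n}{\partial \lambda_1 \cdots \partial \lambda_n} \mathbf{k}_{x^I, \lambda \odot z} \Big\rvert_{\lambda = 0} = \sum_{J \in \mathcal{P}(1, \ldots, n)} S(x^I)^J \, S(z)^J.
\end{equation*}

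The crux is then to verify that $z$ selects only the identity permutation. By Chen's identity (Proposition \ref{prop:chen}) together with the linear-path formula (Proposition \ref{prop:linearsignature}),
\begin{equation*}
    S(z) = \exp(e_1) \otimes \exp(e_2) \otimes \cdots \otimes \exp(e_n).
\end{equation*}
For $J = (j_1, \ldots, j_n) \in \mathcal{P}(1, \ldots, n)$, the coefficient $S(z)^J$ is obtained by picking from each factor $\exp(e_k) = \sum_{m \geq 0} e_k^{\otimes m}/m!$ a tensor power $e_k^{\otimes m_k}$ with $\sum_k m_k = n$, whose concatenation matches $J$. Since each index $k \in \{1, \ldots, n\}$ appears at most once in $J$, we must have $m_k \leq 1$ for every $k$, which together with $\sum_k m_k = n$ forces $m_k = 1$ for all $k$. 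Hence $J = (1, 2, \ldots, n)$ with coefficient $1$ and every other permutation contributes $0$. Only the $J = (1, \ldots, n)$ term survives, leaving $S(x^I)^{(1, \ldots, n)}$, which by unwinding the iterated-integral definition of $x^I = (x^{(i_1)}, \ldots, x^{(i_n)})$ equals $S(x)^I$.

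No step poses a serious analytic obstacle; the only item requiring careful bookkeeping is verifying $S(z)^J = \delta_{J, (1, \ldots, n)}$ on $\mathcal{P}(1, \ldots, n)$, which is a direct calculation once $S(z)$ is written as the above tensor product of exponentials.
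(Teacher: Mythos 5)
Your proof is correct and follows the same route the paper takes: repeat the differentiation/filtering argument from Proposition~\ref{thm:kderiv} with $y$ replaced by $z$, and observe that $S(z) = \exp(e_1)\otimes\cdots\otimes\exp(e_n)$ satisfies the selection condition~\eqref{eq:z_cond} so that only $J=(1,\ldots,n)$ survives among permutations, giving $S(x^I)^{(1,\ldots,n)} = S(x)^I$. You spell out the Chen's-relation computation for $S(z)^J$ in more detail than the paper (which simply asserts it), but the argument is identical in substance.
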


\begin{proof}
    The proof follows exactly that of Proposition \ref{prop:vandermonde} by using $z$ in place of $y$ and noting that $z$ satisfies Condition \eqref{eq:z_cond}.
\end{proof}

\begin{theorem}\label{thm:main}
    Let $\beta_i$ be chosen to satisfy Condition \ref{eq:alphacond} and $\alpha_i$ be as in Proposition \ref{prop:vandermonde_inversion}. Let $z$ denote the axis path given by $z_t = (e_1 * e_2 * \cdots * e_n)_t$ for $t \in [0,1]$, where $(e_i)_t$ is understood to be the linear path from $\mathbf{0}$ to $e_i$. Then for any $x \in C_1([0,1], \mathbb{R}^d)$,
    \begin{equation*}
        \sum_{i=0}^M \alpha_i D^{(n)}_\lambda \mathbf{k}^{\lambda \mhyphen \beta_i}_{x^I, z} \Big\rvert_{\lambda = 0} \to S(x)^I
    \end{equation*}
    
    as $M \to \infty$.
\end{theorem}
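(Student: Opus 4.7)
The plan is to mirror the proof of Proposition~\ref{prop:vandermonde} almost verbatim, substituting the axis path $z$ for the linear path $y$ so that the level-$n$ contribution collapses onto the single index $(1,\ldots,n)$ rather than spreading over the entire anagram class. First I would expand each kernel level by level, using the scaling identity $S(\beta_i \lambda \odot z)^J = \beta_i^{|J|} S(z)^J \prod_k \lambda_{j_k}$, to write
\begin{equation*}
\mathbf{k}^{\lambda \mhyphen \beta_i}_{x^I, z}
= \sum_{m=0}^{\infty} \beta_i^m \sum_{J \in \{1,\ldots,n\}^m} S(x^I)^J \, S(z)^J \prod_{k=1}^{m} \lambda_{j_k}.
\end{equation*}
Applying $D^{(n)}_\lambda|_{\lambda=0}$, exactness on multilinear monomials forces $D^{(n)}_\lambda \prod_k \lambda_{j_k}|_{\lambda=0}$ to vanish whenever some index in $\{1,\ldots,n\}$ is missing from $J$. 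For $m < n$ this is automatic, and at $m = n$ the only surviving multi-indices are permutations of $(1,\ldots,n)$.

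Second, I would invoke the key property of the axis path, already used in Proposition~\ref{prop:order_isolation_exact}: by Chen's relation and Proposition~\ref{prop:linearsignature}, $S(z) = \exp(e_1) \otimes \cdots \otimes \exp(e_n)$, which gives $S(z)^{(1,\ldots,n)} = 1$ and $S(z)^J = 0$ for every other permutation $J$ of $(1,\ldots,n)$. Combining this with the previous step, the level-$n$ contribution from each $i$ reduces to $\beta_i^n \, S(x^I)^{(1,\ldots,n)} = \beta_i^n \, S(x)^I$. For levels $m > n$, one still obtains a decomposition of the form $\beta_i^m \, C_m(x^I, z)$ where $C_m$ is a quantity independent of $i$. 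Summing over $i$ with weights $\alpha_i$ and invoking the Vandermonde equation~\eqref{eq:vandermonde} then gives $\sum_i \alpha_i \beta_i^n = 1$ (preserving the level-$n$ piece exactly as $S(x)^I$) and $\sum_i \alpha_i \beta_i^m = 0$ for $n < m \leq n+M$ (zeroing levels $n+1$ through $n+M$).

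The main obstacle, and the only nontrivial estimate, is controlling the tail over levels $m > n+M$. I would bound the inner quantities $|C_m(x^I, z)|$ using Corollary~\ref{cor:cauchyfactorialdecay} to get factors of order $(\|x^I\|_1 \, n)^m / (m!)^2$ (noting $\|z\|_1 = n$), multiplied by the number of surviving multi-indices of length $m$ and by a factor coming from $|D^{(n)}_\lambda \prod_k \lambda_{j_k}|_{\lambda=0}|$. Combining this with the hypothesis~\eqref{eq:alphacond} on $\max_i|\alpha_i|$ yields a tail sum of the form $\sum_{m>n+M} [(n+M)!]^2 C^M \cdot (\text{poly}(m))^n \cdot K^m / (m!)^2$, which converges to zero as $M \to \infty$ by the same telescoping / ratio argument used in the proof of Proposition~\ref{prop:M_bound}. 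Since every step is either an algebraic identity (steps one and two) or a direct reuse of the estimates already established for the anagram case, the theorem follows; essentially the only new ingredient beyond Proposition~\ref{prop:vandermonde} is the replacement of the uniform factor $1/n!$ coming from $S(y)$ by the Kronecker-delta pattern coming from $S(z)$.
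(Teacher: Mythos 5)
Your proposal is correct and follows essentially the same route as the paper, which proves Theorem~\ref{thm:main} by rerunning the proof of Proposition~\ref{prop:vandermonde} with the axis path $z$ substituted for $y$ and invoking the key property that $z$ satisfies Condition~\eqref{eq:z_cond} (i.e., $S(z)^{(1,\ldots,n)}=1$ and $S(z)^J=0$ for every other $J\in\mathcal{P}(1,\ldots,n)$). The only change from the anagram-class case is precisely the one you identify, and your level-by-level decomposition, Vandermonde cancellation, and factorial-decay tail control all match the structure of the paper's argument in Appendix~\ref{proof:vandermonde}.
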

\begin{proof}
    The proof follows exactly that of Proposition \ref{prop:vandermonde} by using $z$ in place of $y$ and noting that $z$ satisfies Condition \eqref{eq:z_cond}.
\end{proof}

\begin{figure*}[b!]
    \centering
    \begin{subfigure}[t]{0.49\textwidth}
        \centering
        \includegraphics[width = \textwidth]{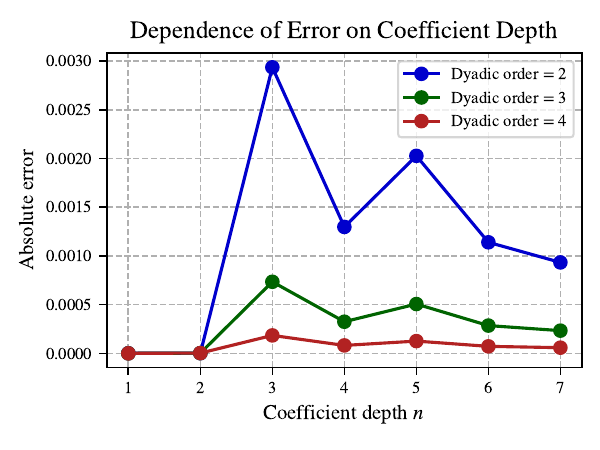}
    \end{subfigure}%
    ~ 
    \begin{subfigure}[t]{0.49\textwidth}
        \centering
        \includegraphics[width = \textwidth]{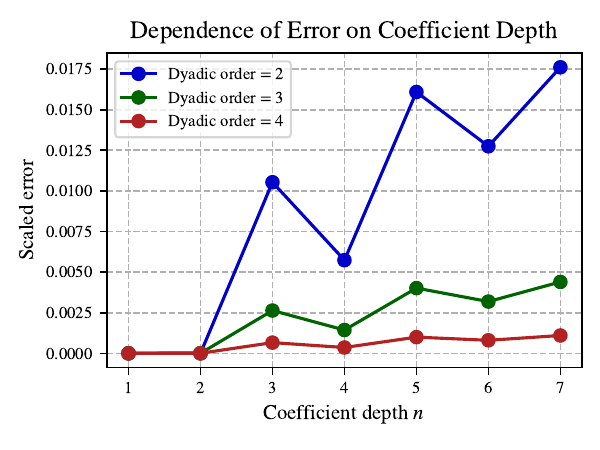}
    \end{subfigure}
    \caption{Average errors when computing $S(x)^{(1,\ldots,n)}$ over 1,000 random paths constrained to $[0,1]^d$, with path length $L = 150$ and scaling depth $M = 3$. We compute average absolute errors (left) and average absolute errors scaled by average coefficient magnitude (right).}
    \label{fig:depth_error}
\end{figure*}

\begin{remark}
    Returning to the original formulation of our approach, the filter $F$ with which we effectively take inner products can now be written as
    \begin{equation*}
        F = \sum_{i=0}^M \alpha_i D^{(n)}_\lambda S(\beta_i \lambda \odot z) \Big\rvert_{\lambda = 0} \in \text{span}(\mathcal{S}).
    \end{equation*}
    Theorem \ref{thm:main} then states that $\left<S(x^I), F\right> \to S(x)^{I}$ as $M \to \infty$. As remarked in the previous section, this convergence is quick.
\end{remark}

A natural question is whether there exist other choices of filtering path $z$ which immediately extract linear combinations of signature coefficients with multi-indices in $\mathcal{P}(I)$. A simple example of this involves computing sums of signature coefficients with \say{blocks} of unordered indices. Suppose, for example, we wish to compute
\begin{equation*}
    S(x)^{(1,2,3)} + S(x)^{(2,1,3)},
\end{equation*}

that is, the sum of coefficients given by a multi-index where 1 and 2 appear before 3, but the order of 1 and 2 is not important. Instead of considering the path $z = e_1 * e_2 * e_3$, we consider $z = (e_1 + e_2) * e_3$ to remove the order constraint on 1 and 2. It is easy to see by Chen's relation that
\begin{align*}
S(z)^{(1,2,3)} = S(z)^{(2,1,3)} &= 1/2,\\
S(z)^{(1,3,2)} = S(z)^{(2,3,1)} = S(z)^{(3,1,2)} = S(z)^{(3,2,1)} &= 0,
\end{align*}

giving the desired isolation pattern on $\mathcal{P}(1,2,3)$. To generalise this idea, we introduce the \textit{concatenation} of multi-indices. For two multi-indices $I = (i_1, \ldots, i_{m_1})$ and $J = (j_1, \ldots, j_{m_2})$, denote the concatenation of $I$ and $J$ by $I*J = (i_1, \ldots, i_{m_1}, j_1, \ldots, j_{m_2})$. For two sets of multi-indices $\mathcal{I}$ and $\mathcal{J}$, denote by $\mathcal{I}*\mathcal{J}$ the set
\begin{equation*}
    \mathcal{I}*\mathcal{J} = \{I * J : I \in \mathcal{I}, J \in \mathcal{J}\}.
\end{equation*}

\begin{figure*}[b!]
    \centering
    \includegraphics[width = 0.6\textwidth, trim={1cm 0.2cm 1cm 0.2cm},clip]{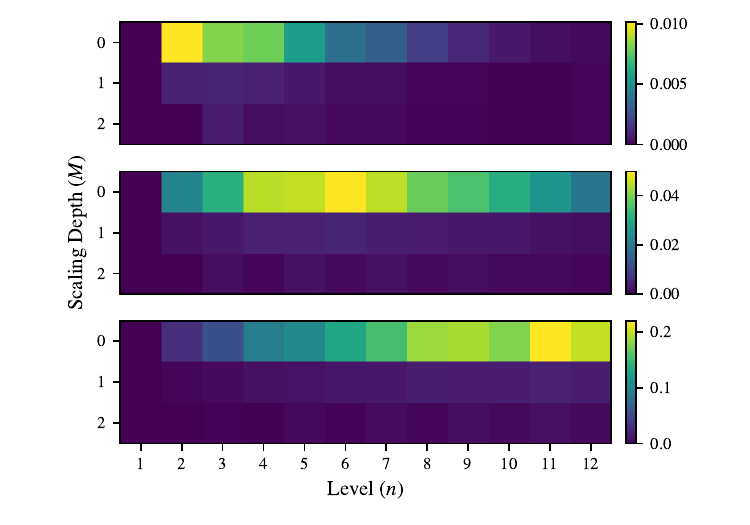}
    \caption{Average errors when computing $S(x)^{(1,2,\ldots, n)}$ for random paths in $[0,1]^d$, with $L = 100$ (top), $L = 500$ (middle) and $L = 1000$ (bottom). Dyadic order of the PDE solver is set to 3.}
    \label{fig:M_error}
\end{figure*}

The following result follows immediately from Chen's relation and Proposition \ref{prop:vandermonde}:

\begin{theorem} \label{thm:semiordered}
    Let $\beta_i$ be chosen to satisfy Condition \ref{eq:alphacond} and let $\alpha_i$ be as in Proposition \ref{prop:vandermonde_inversion}. Suppose $I_1, \ldots, I_m$ are multi-indices of lengths $l_1, \ldots, l_m$ respectively, such that $I_1 * \cdots * I_m = I$. Let $j_i = \sum_{p=1}^i l_p$ for $i = 1, \ldots, m$ and denote by $z$ the path
    \begin{equation*}
        z_t = ((e_1 + \cdots + e_{j_1}) * (e_{j_1+1} + \cdots + e_{j_2}) * \cdots * (e_{j_{m-1}+1} + \cdots + e_n))_t
    \end{equation*}
    for $t \in [0,1]$, where $(e_i)_t$ is understood to be the linear path from $\mathbf{0}$ to $e_i$. Then
    \begin{equation*}
    \sum_{i=0}^M \alpha_i D^{(n)}_\lambda \mathbf{k}^{\lambda \mhyphen \beta_i}_{x^I,z} \Big\rvert_{\lambda = 0} \to \frac{1}{l_1! l_2! \cdots l_m!} \hspace{1mm} S(x)^{\mathcal{P}(I_1) * \mathcal{P}(I_2) * \cdots * \mathcal{P}(I_m)}
    \end{equation*}
    as $M \to \infty$, for any $x \in C_1([0,1], \mathbb{R}^d)$.
\end{theorem}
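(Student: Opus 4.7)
The plan is to reduce the statement directly to the machinery already established in Theorem \ref{thm:main}, which itself extends Proposition \ref{prop:vandermonde} by replacing the linear filtering path with a more structured one. The key observation is that the Vandermonde-weighted derivative scheme acting on $\mathbf{k}^{\lambda \mhyphen \beta_i}_{x^I, z}$ is agnostic to the specific choice of $z$: as proved in Proposition \ref{prop:vandermonde}, the derivative $D^{(n)}_\lambda \cdot \big|_{\lambda=0}$ annihilates every contribution whose multi-index is not a permutation of $(1, \ldots, n)$, the $\alpha_i,\beta_i$ coefficients zero signature levels $n+1, \ldots, n+M$, and the remaining tail vanishes as $M \to \infty$ by the factorial decay of Lemma \ref{thm:factorialdecay}. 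Running that argument verbatim, with $z$ in place of the linear path, yields
\begin{equation*}
    \sum_{i=0}^M \alpha_i D^{(n)}_\lambda \mathbf{k}^{\lambda \mhyphen \beta_i}_{x^I, z} \Big\rvert_{\lambda = 0} \;\longrightarrow\; \sum_{J \in \mathcal{P}(1, \ldots, n)} S(z)^J \cdot S(x^I)^J,
\end{equation*}
so the remaining work is to identify the right-hand side for the specific $z$ in the statement.

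The main computation is therefore to evaluate $S(z)^J$ for $J \in \mathcal{P}(1, \ldots, n)$ using Chen's relation. Writing $z = \zeta_1 * \cdots * \zeta_m$ where $\zeta_p$ is the linear segment from $\mathbf{0}$ to $e_{j_{p-1}+1} + \cdots + e_{j_p}$ (with $j_0 := 0$), Propositions \ref{prop:chen} and \ref{prop:linearsignature} give $S(z) = S(\zeta_1) \otimes \cdots \otimes S(\zeta_m)$ with $S(\zeta_p) = \exp(e_{j_{p-1}+1} + \cdots + e_{j_p})$. Expanding Chen's identity, the contribution $S(z)^J$ is a sum over all block decompositions $J = K_1 * \cdots * K_m$, where each block $K_p$ must have entries drawn only from $\{j_{p-1}+1, \ldots, j_p\}$ for the factor $S(\zeta_p)^{K_p}$ to be nonzero. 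Since $J$ is a permutation of $(1, \ldots, n)$, each such index appears exactly once in $J$, which forces $K_p$ to be a permutation of $(j_{p-1}+1, \ldots, j_p)$; the tensor exponential then contributes a factor of $1/l_p!$ per block. Thus $S(z)^J = 1/(l_1!\cdots l_m!)$ when $J \in \mathcal{P}(j_0+1, \ldots, j_1) * \cdots * \mathcal{P}(j_{m-1}+1, \ldots, j_m)$, and $S(z)^J = 0$ otherwise.

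The final step is to re-index $S(x^I)^J$ for the surviving $J$. Since $x^I = (x^{(i_1)}, \ldots, x^{(i_n)})$, a multi-index $J = (J_1, \ldots, J_n)$ on $x^I$ corresponds to the multi-index $J' := (i_{J_1}, \ldots, i_{J_n})$ on $x$, and the block structure $K_p \in \mathcal{P}(j_{p-1}+1, \ldots, j_p)$ transports into the corresponding block of $J'$ being a permutation of $I_p$. The map $J \mapsto J'$ restricts to a bijection between $\mathcal{P}(j_0+1, \ldots, j_1) * \cdots * \mathcal{P}(j_{m-1}+1, \ldots, j_m)$ and $\mathcal{P}(I_1) * \cdots * \mathcal{P}(I_m)$, so summing over surviving $J$ reproduces $S(x)^{\mathcal{P}(I_1) * \cdots * \mathcal{P}(I_m)}$, uniformly weighted by $1/(l_1! \cdots l_m!)$.

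The only genuine obstacle is the combinatorial bookkeeping in the middle step: one must verify carefully that within $\mathcal{P}(1, \ldots, n)$ the only non-vanishing Chen decompositions are the block-aligned ones, which hinges on combining the restriction that each entry of $K_p$ lies in $\{j_{p-1}+1, \ldots, j_p\}$ with the fact that each element of $\{1, \ldots, n\}$ appears exactly once in $J$. Once this is established, the bijection in the last step and the appeal to Theorem \ref{thm:main}'s convergence are routine.
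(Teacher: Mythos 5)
Your proof is correct and follows essentially the same route as the paper's: reduce the Vandermonde-weighted kernel scheme to the level-$n$ bilinear form $\sum_{J \in \mathcal{P}(1,\ldots,n)} S(x^I)^J S(z)^J$ via the argument of Proposition~\ref{prop:vandermonde}, compute $S(z)^J$ explicitly through Chen's relation and the tensor exponential, and re-index from $x^I$ back to $x$. You spell out two steps that the paper compresses into ``it is easy to see'': the forced block-alignment of the Chen decomposition (because each symbol of $\{1,\ldots,n\}$ appears exactly once in $J$) and the re-indexing bijection $J \mapsto (i_{J_1},\ldots,i_{J_n})$; note that, as in the paper, the latter is a genuine bijection onto $\mathcal{P}(I_1)*\cdots*\mathcal{P}(I_m)$ only when each $I_p$ has distinct entries, an implicit hypothesis shared by both arguments.
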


\begin{proof}
    See Appendix \ref{proof:semiordered}.
\end{proof}

\begin{remark}
    The single coefficient case corresponds to $I_j = (j)$, since then $\mathcal{P}(I_1) * \mathcal{P}(I_2) * \cdots * \mathcal{P}(I_m) = I$. Similarly, isolation of $S(x)^{\mathcal{P}(I)}$ corresponds to $I_1 = I$.
\end{remark}

Theorem \ref{thm:semiordered} has a simple interpretation. Let
\begin{align*}
    \tilde{x}^{i}_t &:= \int_{[0,t]^{l_i}} \prod_{j \in I_i} dx^{(j)}_{u_j}\\
    &= \prod_{j \in I_i} \left(x^{(j)}_t - x^{(j)}_0\right)
\end{align*}

and define $\tilde{x} = (\tilde{x}^{1}, \ldots, \tilde{x}^{m})$, where $\tilde{x}$ can either be viewed as a path of $l_i$-dimensional integrals with respect to the channels of $x$ given by indices in $I_i$, or equivalently as a product of channels as above. Then we may write
\begin{equation*}
S(x)^{\mathcal{P}(I_1) * \mathcal{P}(I_2) * \cdots * \mathcal{P}(I_m)} = S(\tilde{x})^{(1,\ldots,m)}.
\end{equation*}

Whether there is a simple way to construct a filtering path $z$ which extracts an arbitrary linear combination of coefficients in $\mathcal{P}(I)$ is beyond the scope of this paper, and left for future research. It is reasonable to suggest that an adapted signature inversion technique, such as those presented in \cite{chang2019insertion, lyons2017hyperbolic, lyons2018inverting}, could be used to solve a general form of the constraint \eqref{eq:z_cond}.

\section{The Kernel Goursat PDE} \label{chapter:pde}

Recall the signature kernel PDE \eqref{sigkernelpde} is given by
\begin{equation} \label{eq:general_PDE}
    \frac{\partial ^ 2 \mathbf{k}_{x,y}}{\partial t \partial s} = \left< \dot{x}_t, \dot{y}_s \right>_V \mathbf{k}_{x,y}, \hspace{5mm} \mathbf{k}_{x,y}(0, \cdot) = \mathbf{k}_{x,y}(\cdot, 0) = 1
\end{equation}

for general $x,y \in C_1([0,1], V)$. In the case where $y$ is the axis path $z$ as in Theorem \ref{thm:main}, this becomes

\begin{equation} \label{eq:special_PDE}
    \frac{\partial ^ 2 \mathbf{k}_{x,z}}{\partial t \partial s} = \dot{x}_t^{(m)} \mathbf{k}_{x,z}, \quad \text{for } s \in \left[\frac{m}{d}, \frac{m+1}{d}\right], \quad \mathbf{k}_{x,z}(0, \cdot) = \mathbf{k}_{x,z}(\cdot, 0) = 1
\end{equation}

assuming $z$ attains its $i^{th}$ node at $t = i/d$. Taking inspiration from \cite{day1966runge, salvi2021signature, wazwaz1993numerical}, we propose the finite difference scheme
\begin{align} \label{eq:kernelFinDiff}
    \widehat{\mathbf{k}}(t_{i+1}, s_{j+1}) &= \left(\widehat{\mathbf{k}}(t_{i+1}, s_j) + \widehat{\mathbf{k}}(t_i, s_{j+1})\right) A(t_i, s_j) - \widehat{\mathbf{k}}(t_i, s_j) B(t_i, s_j)\\
    A(t_i, s_j) &= \left(1 + \frac{1}{2} \Delta_j x_{t_i}  + \frac{1}{12}\Delta_j x_{t_i} ^2\right) \nonumber \\
    B(t_i, s_j) &= \left(1 - \frac{1}{12} \Delta_j x_{t_i} ^2\right) \nonumber \\
    \Delta_j x_{t_i} &= (x^{(\hat j)}_{t_{i+1}} - x^{(\hat j)}_{t_i}) / 2^{\gamma_2}, \quad \hat j = \lfloor j/2^{\gamma_2} \rfloor\nonumber
\end{align}

over the dyadically refined grid $P_{\gamma_1, \gamma_2} = \{(t_i, s_j) : 0 \leq i \leq 2^{\gamma_1} L, \hspace{1mm} 0\leq j \leq 2^{\gamma_2} n\}$ of order $(\gamma_1, \gamma_2)$, where $L$ is the length of the discrete data stream $x$.\par\medskip

As in \cite{salvi2021signature}, for $n$-dimensional paths, the corresponding scheme for the PDE \eqref{eq:general_PDE} carries a serial complexity of $\mathcal{O}(Ln^2)$, since one must traverse a PDE grid of size $\mathcal{O}(Ln)$, evaluating $\left< \dot{x}_t, \dot{y}_s \right>$ at each step in $\mathcal{O}(n)$ time. If we traverse the PDE grid diagonal-wise rather than row-wise or column-wise, we note that each diagonal depends only on entries in the previous one, and so we may parallelise the computation within each diagonal. Assuming $L \geq n$, this reduces the complexity to $\mathcal{O}(Ln)$. In the case of the PDE \eqref{eq:special_PDE}, we no longer need to compute $\left< \dot{x}_t, \dot{y}_s \right>$ at each step of the computation, resulting in a serial complexity of $\mathcal{O}(Ln)$. The same parallelisation trick reduces this to $\mathcal{O}(L)$.\par\medskip

Note that the sum in Theorem \ref{thm:main} consists of $\mathcal{O}(2^n)$ many kernels, each satisfying a PDE of the form \eqref{eq:special_PDE}, leading to a serial complexity of $\mathcal{O}(Ln2^n)$. By applying the above trick to parallelise each individual kernel evaluation, and noting that each term of the sum can be computed independently, we see that the sum can be evaluated at a parallel complexity of $\mathcal{O}(L)$.

\begin{theorem}[{\cite[Theorem 3.5]{salvi2021signature}}]
    Suppose there exists a constant $M$, independent of $\gamma_1, \gamma_2$ such that
    \begin{equation*}
        \sup_{s,t} \lvert \left< \dot{x}_t, \dot{z}_s \right> \rvert < M.
    \end{equation*}
    Then there exists a constant $K > 0$, depending on $M$ and $\mathbf{k}_{x,y}$ but independent of $\gamma_1, \gamma_2$, such that
    \begin{equation*}
        \sup_{s,t} \left\lvert \mathbf{k}_{x,z}(t,s) - \widehat{\mathbf{k}}_{x,z}(t,s) \right\rvert \leq \frac{K}{2^{\gamma_1 + \gamma_2}}, \quad \forall \gamma_1, \gamma_2 \geq 0.
    \end{equation*}
\end{theorem}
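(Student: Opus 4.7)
The plan is to follow the classical Lax–Richtmyer recipe for finite-difference schemes: establish consistency (local truncation error of the right order), then stability (controlled error amplification), which together yield convergence at the stated rate. Write $h_t = 2^{-\gamma_1}$ and $h_s = 2^{-\gamma_2}$ for the grid spacings, so the claimed bound is $O(h_t h_s)$. Since the result is essentially quoted from \cite[Theorem 3.5]{salvi2021signature}, the proof may be obtained by adapting their argument to the piecewise-linear axis path $z$; the key adaptation is to handle the fact that $\dot z_s$ is discontinuous at the $n-1$ interior nodes $s = m/n$.

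First I would derive the local truncation error. Substituting the exact solution $\mathbf{k}=\mathbf{k}_{x,z}$ into \eqref{eq:kernelFinDiff} and Taylor expanding about $(t_i,s_j)$, the left-hand side contributes $\mathbf{k} + h_t \partial_t \mathbf{k} + h_s \partial_s \mathbf{k} + h_t h_s \partial_t\partial_s \mathbf{k} + \cdots$, while the combination on the right, written via $A$ and $B$, is designed so that the terms of order $1$, $h_t$, $h_s$ cancel identically, and the order $h_t h_s$ term matches the Goursat PDE $\partial_t\partial_s\mathbf{k} = \dot x^{(m)}_t \mathbf{k}$ exactly when $\dot x$ is replaced by its discrete average $\Delta_j x_{t_i}$. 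The specific coefficients $1/2$ and $1/12$ in $A, B$ are the Padé weights that make the scheme second-order accurate in the cell variable $\delta = \Delta_j x_{t_i}$, so the local truncation error is $\tau_{i,j} = O(h_t^2 h_s^2)$ on any cell where $\mathbf{k}$ is $C^4$.

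Next I would establish stability. Let $e_{i,j} = \mathbf{k}(t_i, s_j) - \widehat{\mathbf{k}}(t_i, s_j)$. Subtracting \eqref{eq:kernelFinDiff} from the analogous identity for $\mathbf{k}$ gives a recursion
\begin{equation*}
e_{i+1,j+1} = A(t_i,s_j)\bigl(e_{i+1,j} + e_{i,j+1}\bigr) - B(t_i,s_j) e_{i,j} + \tau_{i,j}.
\end{equation*}
Using $|\langle \dot x_t, \dot z_s\rangle| < M$, one checks that $|\delta| \leq M h_t h_s$, so $|A|, |B| \leq 1 + C M h_t h_s$ for some universal $C$. Letting $E_k = \max_{i+j=k}|e_{i,j}|$, the recursion yields $E_{k+1} \leq (1 + CMh_th_s) E_k + \max_{i+j=k}|\tau_{i,j}|$. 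A discrete Gronwall argument propagates this through the $O(h_t^{-1} h_s^{-1})$ cells of the grid, giving $\max_{i,j}|e_{i,j}| \leq e^{CM}\cdot (\text{number of cells})\cdot \max|\tau_{i,j}| = O(h_t h_s)$, which is the claimed rate.

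The main obstacle is the regularity issue at the nodes $s = m/n$ of the axis path $z$, where $\dot z_s$ jumps and hence $\partial_s\mathbf{k}$ fails to be $C^1$, so the Taylor expansion underpinning the truncation estimate is not valid globally. I would handle this by splitting the $s$-grid into the $n$ smooth sub-strips $[m/n,(m+1)/n]$, running the consistency estimate on each strip separately (where $\mathbf{k}$ is smooth because $\dot x$ is assumed bounded and $\dot z$ is constant), and treating the $O(n)$ grid rows adjacent to a node by a crude $O(h_t h_s)$ bound rather than the higher-order truncation estimate; since this only adds $O(n h_t h_s)$ to the total, the overall $O(2^{-(\gamma_1+\gamma_2)})$ rate is preserved, with the constant $K$ absorbing $n$, $M$, and the sup-norms of the mixed derivatives of $\mathbf{k}_{x,z}$ on the smooth strips.
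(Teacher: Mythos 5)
You should first note that the paper itself contains no proof of this statement: it is imported verbatim from \cite[Theorem 3.5]{salvi2021signature}, so the only thing to compare against is that reference, whose argument is indeed the consistency-plus-stability (discrete Gronwall along antidiagonals) strategy you outline. Your stability step is fine in outline, but the crux of your sketch --- the consistency claim --- is asserted rather than proved, and as stated it does not hold. Expanding the exact solution cell by cell (on each cell of $P_{\gamma_1,\gamma_2}$ the coefficient $c=\langle \dot{x}_t,\dot{z}_s\rangle$ is constant, $\mathbf{k}_{ts}=c\,\mathbf{k}$, and $\delta=\Delta_j x_{t_i}\approx c\,h_t h_s$), the terms of order $\delta h_t$ and $\delta h_s$ do cancel, but the leading residual of the scheme \eqref{eq:kernelFinDiff} is $-\tfrac{\delta}{12}\left(h_t^2\,\mathbf{k}_{tt}+h_s^2\,\mathbf{k}_{ss}\right)=O(h_t^3 h_s+h_t h_s^3)$, not $O(h_t^2h_s^2)$; the $1/12$ weights do not remove the $\mathbf{k}_{tt}$, $\mathbf{k}_{ss}$ contributions, which are nonzero in general. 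Summing over the $O(2^{\gamma_1+\gamma_2}Ln)$ cells, your argument therefore yields a global bound of order $2^{-2\gamma_1}+2^{-2\gamma_2}$, which agrees with the stated $K\,2^{-\gamma_1-\gamma_2}$ only for balanced refinements and is strictly weaker when $\gamma_1\neq\gamma_2$ (e.g.\ $\gamma_1=0$, $\gamma_2\to\infty$). So the precise claim of the theorem --- the rate $2^{-\gamma_1-\gamma_2}$ uniformly in both orders --- is exactly the point your sketch leaves unestablished; you would need either to exhibit the finer cancellation or to reproduce the actual estimate of the cited paper rather than appeal to ``Pad\'e weights''.

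Your treatment of the kinks of $z$ is also off, in two ways. First, it is unnecessary: the grid $P_{\gamma_1,\gamma_2}$ dyadically refines each linear piece of $x$ and each of the $n$ axis segments of $z$, so every discontinuity of $\dot{x}$ and $\dot{z}$ lies on a grid line; on each closed cell the coefficient is constant and the restricted solution of the Goursat problem is smooth, so the Taylor expansion is legitimate on every cell and no special handling of the rows adjacent to $s=m/n$ is needed. Second, the accounting of your proposed workaround is incorrect: a row of cells adjacent to a node contains $\sim 2^{\gamma_1}L$ cells, so a crude $O(h_t h_s)$ per-cell truncation bound on those rows contributes $O(L\,2^{-\gamma_2})$ in total (with no decay in $\gamma_1$), not the $O(n h_t h_s)$ you claim; had this patch actually been needed, it would have destroyed the stated rate rather than preserved it.
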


\begin{remark}
    Given that usually $L \gg n$, one may choose to differ the dyadic refinement over $s$ and $t$ and take $\gamma_1 \neq \gamma_2$. For simplicity, we will take $\gamma_1 = \gamma_2 = \gamma$ in our experiments.
\end{remark}

\begin{figure*}[t!]
    \centering
    \begin{subfigure}[t]{0.49\textwidth}
        \centering
        \includegraphics[width = \textwidth]{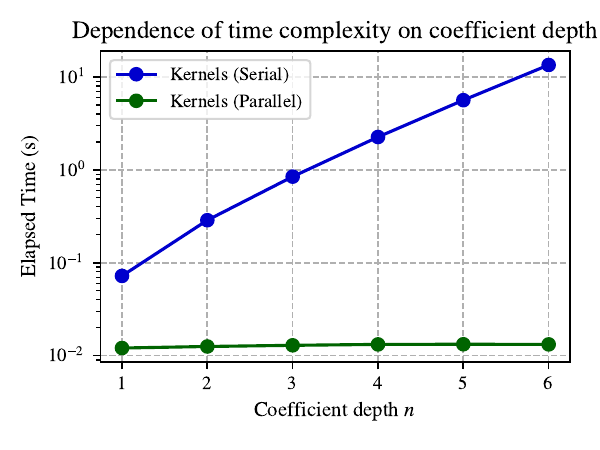}
    \end{subfigure}%
    ~ 
    \begin{subfigure}[t]{0.49\textwidth}
        \centering
        \includegraphics[width = \textwidth]{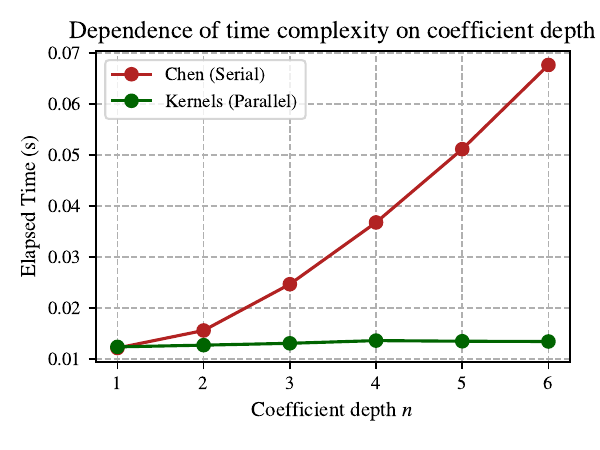}
    \end{subfigure}
    \caption{Dependence of runtime on coefficient depth, using serial versus parallel kernel computations (left, log scale), and  parallel kernel computations versus a serial computation using Chen's relation (right, linear scale). Runtime is averaged over 10 random paths constrained to $[0,1]^d$, with path length $L = 10,000$, scaling depth $M = 1$ and PDE dyadic order set to $1$. For comparability, all computations are performed on an NVIDIA Tesla P100 GPU.}
    \label{fig:complexity}
\end{figure*}

\subsection{Batch Retrieval of Coefficients and Choice of \texorpdfstring{$D^{(n)}_\lambda$}{TEXT}} \label{section:grid_retrieval}

As we will see in Section \ref{chapter:example}, in practical use cases for sparse collections of signature coefficients, it is often the case that if $S(x)^I$ is required, then so is $S(x)^J$ for some $J \subset I$. As we saw in Section \ref{section:signature_transform}, one of the benefits of using Chen's relation is that the coefficient $S(x)_{[0,t]}^J$ can be extracted from intermediate steps in the computation of $S(x)_{[0,1]}^I$ without the need for additional computations, assuming $J$ is an initial segment of $I$ and $t \leq 1$. We note in this section that the same can be achieved with Theorem 3.1. For convenience, we will refer to $S(x)_{[0,t]}^J$ as a \say{sub-coefficient} of $S(x)_{[0,1]}^I$ if $J \subset I$ and $t \leq 1$, where $t$ is assumed to be a node in the refined grid $P_{\gamma_1, \gamma_2}$. \par \medskip

In the discussion that follows, we will denote by $z_k$ the axis path
\begin{equation*}
    (z_k)_t := (e_1 * e_2 * \cdots * e_k)_t
\end{equation*}
for $t \in [0,1]$, where $(e_i)_t$ is understood to be the linear path from $\mathbf{0}$ to $e_i$. Suppose we have computed $S(x)^I$ using Theorem \ref{thm:main} and the scheme \eqref{eq:kernelFinDiff}, choosing to use the central difference $D_\lambda^{(n)} = CD_\lambda^{(n)}$ with a step-size $h=1$, and storing all intermediate values along the PDE grid 
\begin{equation*}
P^{|I|}_{\gamma_1, \gamma_2} := \{(t_j, s_k) : 0 \leq j \leq 2^{\gamma_1} L, \hspace{1mm} 0\leq k \leq 2^{\gamma_2} |I|\}.
\end{equation*}
Then one obtains the set
\begin{equation*}
    \mathfrak{K}_I := \left\{\mathbf{k}^{\widetilde \lambda \mhyphen \beta_i}_{x^I, z_{|I|}}(t_j, s_k) : \widetilde \lambda \in \Lambda_{CD}^{(n)}, 0 \leq i \leq M, (t_j, s_k) \in P^{|I|}_{\gamma_1, \gamma_2}\right\}
\end{equation*}

where $\Lambda_{CD}^{(n)}$ is the set of arguments required for the central difference $CD^{(n)}_\lambda$ as in \eqref{eq:D_defn} and \eqref{eq:CD_defn}. For $I = (i_1, \ldots, i_n)$, let $J = (i_1, \ldots, i_m)$ where $m<n$. Then it is easy to see that
\begin{equation*}
    \mathfrak{K}_J \subset \mathfrak{K}_I,
\end{equation*}
and so one can recover $S(x)^J$ without additional kernel computations. The same argument applies if one wishes to recover $S(x)_{[0,t]}^J$ having computed $S(x)^I_{[0,1]}$ for $J \subset I$, $t \leq 1$. If instead of initial segments one is more interested in sub-coefficients given by final segments $J = (i_m, \ldots, i_n)$, one can perform the same trick but with the time-reversed paths $\overleftarrow{x}, \overleftarrow{z}$. \par\medskip

With certain choices of $D^{(n)}_\lambda$, one can go even further. Our recommended choice for $D^{(n)}_\lambda$ in general is the central difference $CD^{(n)}_\lambda$ with step-size $h = 1$, due to its numerical stability and accuracy. However, suppose we choose the forward difference $FD^{(n)}_\lambda$, and we are interested in computing $S(x)^I$ and $S(x)^J$ where $J$ is a subset of $I$ but not necessarily an initial (or final) segment. Let $\lambda^{I,J} \in \mathbb{R}^n$ be such that $\lambda^{I,J}_k = \mathbf{1}\{i_k \in J\}$. Then it is easy to see that if the step-size is taken as $h = 1$, 
\begin{equation*}
    FD^{(m)}_\lambda \mathbf{k}^{\lambda \mhyphen \beta_i}_{x^J, z_m} \Big\rvert_{\lambda = 0} = FD^{(n)}_\lambda \mathbf{k}^{(\lambda^{I,J} \odot \lambda) \mhyphen \beta_i}_{x^I, z_n} \Big\rvert_{\lambda = 0},
\end{equation*}
which allows for the recovery of $S(x)^J$, since $\lambda^{I,J} \odot \lambda \in \Lambda_{FD}^{(n)}$ for all $\lambda \in \{0,1\}$. Thus, whilst the central difference allows for recovery of sub-coefficients whose multi-indices are initial (or final) segments of $I$, the forward difference allows for recovery of \textit{all} sub-coefficients.

\section{Example: State-Space Models with Time-Dependent Flows} \label{chapter:example}
In this Section, we present an example use case of signatures where only a small number of coefficients of the transform are needed, and computation of the full signature is unnecessary. As discussed in  \cite[Section 7.1]{friz2010multidimensional} and \cite[Section 3.2]{cass2024lecture}, the $N^{th}$ order truncated signature can be used to form $N$-step Euler discretisation schemes for CDEs driven by the path $x$.

\begin{definition}
    Given a vector field $g = (g_1, \ldots, g_d)^T : \mathbb{R}^m \to \mathbb{R}^m$, recall its identification with the first order differential operator
    \begin{equation*}
        \sum_{i=1}^m g_i(y) \frac{\partial}{\partial y^i}.
    \end{equation*}
    Given a family of smooth vector fields $f = (f_1, \ldots, f_d)$ on $\mathbb{R}^m$, let $\Phi_f$ denote the algebra homomorphism from $T((\mathbb{R}^d))$ to the space of differential operators, given by
    \begin{align*}
        \Phi_f(1) &= I\\
        \Phi_f(e_{i_1} \cdots e_{i_k}) &= f_{i_1} \cdots f_{i_k}
    \end{align*}
    where $I$ is the identity operator and $f_{i_1} \cdots f_{i_k}$ denotes a composition of differential operators.
\end{definition}

Given $x_t \in \mathbb{R}^d$ and $y_0 \in \mathbb{R}^m$, consider the CDE
\begin{equation} \label{eq:ode}
    dy_t = f(y_t) dx_t = \sum_{i=1}^d f_i(y_t) dx^{(i)}_t.
\end{equation}

A Taylor expansion leads to the $N$-step Euler scheme

\begin{equation} \label{eq:EulerFull}
    y_t \approx y_s + \sum_{\substack{i_1, \ldots, i_k \in \{1,\ldots, d\} \\ 1 \leq k \leq N}} f_{i_1} \cdots f_{i_k} I(y_s) S(x)^{(i_1, \ldots, i_k)}_{[s,t]},
\end{equation}

for $0 < t - s \ll 1 $, which for convenience can be written more compactly as
\begin{equation} \label{eq:EulerCompact}
    y_t \approx \mathcal{E}_{s,t}^N (y_s; f, x)
\end{equation}

where $\mathcal{E}_{s,t}^N (y_s; f, x) := \Phi_f \left( S^N(x)_{[s,t]} \right) I(y_s)$. Repeated iterations of the scheme over a fine partition $\{0 = t_0 < t_1 < \cdots < t_n = T\}$ lead to an approximate solution for $y_t$ over a desired interval $[0,T]$. We consider some examples of the CDE \eqref{eq:ode} where the resulting scheme is sparse and depends on a small number of coefficients, giving a potential use case for Theorem \ref{thm:main}. Specifically, we will consider sparsely connected finite state-space models, which restrict the non-zero coefficients in \eqref{eq:EulerCompact}. The following definition will significantly simplify later arguments.

\begin{definition} \label{defn:linear_cde}
    Consider a CDE of the form
    \begin{equation}\label{eq:linear_cde}
        dy_t = \sum_{i=1}^d A^i y_t dx^{(i)}_t
    \end{equation}
    for some matrices $A^i \in \mathbb{R}^{m \times m}$. We define the directional graph $G=(V,E)$ corresponding to the CDE by
    \begin{align*}
        V &= \{y^{(1)}, \ldots, y^{(m)}\},\\
        E &= \bigcup_{i=1}^d \{(y^{(j)}, y^{(k)}) : A^i_{j,k} \neq 0\}.
    \end{align*}
    In addition, define
    \begin{equation*}
        \omega_N(G) := \{(y^{(j_1)}, \ldots, y^{(j_m)}) : (y^{(j_k)}, y^{(j_{k+1})}) \in E \hspace{2mm} \forall 1 \leq k < m, \hspace{2mm} m \leq N\} \cup \{\emptyset\}
    \end{equation*}
    to be the set of all walks of length $\leq N$ on $G$, including the empty walk $\emptyset$. Define similarly
    \begin{equation*}
        \omega(G) := \bigcup_{N = 1}^\infty \omega_N(G)
    \end{equation*}
    to be the set of all walks on $G$.
\end{definition}

\begin{remark} \label{remark:CDE_assumptions}
    In the context of the above construction, $A^i$ can be thought of as the adjacency matrix of the sub-graph of $G$ which corresponds to flows controlled by the component $x^i$. We will make three simplifying assumptions about the CDE \eqref{eq:linear_cde}: \vspace{2mm}
    \begin{enumerate}
        \item $A^i$ are maximally sparse, in the sense that for each $i$, there exist unique $j,k$ such that $A^i_{j,k} \neq 0$. That is, each component $i$ of the path $x$ describes exactly one flow between two nodes $y^{(j)}, y^{(k)}$.
        \item $A^i$ are disjoint, in the sense that $A^{i_1}_{j,k}, A^{i_2}_{j,k} \neq 0$ if and only if $i_1 = i_2$, so that there are no double edges in the graph $G$. That is, the flow from a node $y^{(j)}$ to $y^{(k)}$ is controlled by at most one component of the path $x$.
        \item $A^i \in \{0,1\}^{m \times m}$ are binary matrices.
    \end{enumerate}\vspace{2mm}
    We will, however, allow $G$ to contain loops. We stress that these assumptions are made purely to allow the analysis which follows (Lemma \ref{lemma:graph_sparsity}), and are not in general necessary to construct an Euler scheme for the CDE \eqref{eq:linear_cde}. In particular, Expression \eqref{eq:EulerWalks} below is easily modified to account for the general case.
\end{remark}

\begin{definition}
    Given the construction of Definition \ref{defn:linear_cde} and the assumptions of Remark \ref{remark:CDE_assumptions}, define recursively the map
    \begin{align*}
        \mathcal{F} : \omega(G) &\to \bigcup_{N = 0}^\infty \{0, \ldots, d\}^N \vspace{1mm}\\
        \mathcal{F}(\emptyset) &= \emptyset\\
        \mathcal{F}((y^{(j)}, y^{(j_2)})) &= (i) \quad \text{where } A^i_{j_1,j_2} \neq 0\\
        \mathcal{F}((y^{(j_1)}, \ldots, y^{(j_m)})) &= \mathcal{F}((y^{(j_1)}, \ldots, y^{(j_{m-1})})) * (i) \quad \text{where } A^i_{j_{m-1},j_m} \neq 0
    \end{align*}
    where $*$ denotes the concatenation of multi-indices. For a walk $w \in \omega(G)$, $\mathcal{F}(w)$ then describes the \say{flow} along the walk as a multi-index corresponding to channels of $x$.
\end{definition}

The signature coefficients required to update the value of $y^{(j)}$ using the scheme \eqref{eq:EulerFull} can then be easily read off as the walks of length $\leq N$ ending at $y^{(j)}$. That is, under the assumptions of Remark \ref{remark:CDE_assumptions}, we may write the $N$-step Euler scheme \eqref{eq:EulerFull} as
\begin{equation} \label{eq:EulerWalks}
    y_t \approx y_s + \sum_{\substack{w = \left(y^{(j_1)}, \ldots, y^{(j_m)}\right) \\ \in \omega_{N}(G) \setminus \{\emptyset\}}} S(x)^{\mathcal{F}(w)}_{[s,t]} y_s^{(j_1)} e_{j_m}.
\end{equation}

\begin{remark}
    Given the discussion in Section \ref{section:grid_retrieval}, it suffices to run the computation of coefficients only for the longest walks originating from any given node, since coefficients corresponding to shorter walks can be extracted from the PDE grid. Depending on whether these coefficients are computed serially, one may also choose to cache large chunks of the computation in the case of walks coinciding on long initial segments. For example, having computed the coefficient corresponding to a walk $(y^{(j_1)}, y^{(j_2)}, \ldots, y^{(j_{m-1})}, y^{(j_m)})$, we immediately obtain the coefficient for $(y^{(j_1)}, y^{(j_2)}, \ldots, y^{(j_{m-1})})$, and may reuse a significant portion of the PDE grid to compute $(y^{(j_1)}, y^{(j_1)}, \ldots, y^{(j_{m-1})}, y^{(j_k)})$ for $k \neq m$.
\end{remark}

With the above representation in mind, it is clear that the CDEs which require the fewest signature coefficients are those which limit the number of walks through the corresponding directional graph. In particular, one may choose to look at CDEs whose directional graph representation is acyclic or of low maximum out-degree. However, to properly quantify the benefit of computing isolated coefficients over the naive computation of a full truncated signature, we should introduce the idea of the \textit{sparsity} of an Euler scheme as the ratio of the number of terms in the scheme \eqref{eq:EulerWalks} to the length of the signature up to order $N$. As we will see, under the assumptions of Remark \ref{remark:CDE_assumptions}, the sparsity is mostly independent of the number of edges $d$ in $G$, and indeed the number of walks.

\begin{definition}[Sparsity] \label{def:sparsity}
    Let $\lVert \mathcal{E}_{s,t}^N \rVert_0$ denote the number of terms in the sum \eqref{eq:EulerWalks}. Define the sparsity $s(\mathcal{E}_{s,t}^N)$ of the N-step Euler scheme as
    \begin{equation}
        s(\mathcal{E}_{s,t}^N) = \frac{d-1}{d^{N+1} - 1} \lVert \mathcal{E}_{s,t}^N \rVert_0.
    \end{equation}
\end{definition}

We can immediately obtain a basic bound on the sparsity with respect to the maximum out-degree of $G$, which simplifies in the case of regularity.

\begin{lemma} \label{lemma:graph_sparsity}
    Consider a CDE of the form \eqref{eq:linear_cde} and let $G = (V,E)$ be its corresponding directional graph, with $|V| = m$ and $|E| = d$. Suppose the assumptions of Remark \ref{remark:CDE_assumptions} hold. Let $\Delta^+(G)$ denote the maximum out-degree of $G$. Then
    \begin{equation*}
        s(\mathcal{E}_{s,t}^N) \leq m  \cdot \frac{d-1}{d^{N+1} - 1} \cdot \frac{\left[ \Delta^+(G) \right]^{N+1} - 1}{\Delta^+(G) - 1}
    \end{equation*}
    with equality if and only if $G$ is $\Delta^+(G)$-regular with respect to the out-degree. In the case of equality, when $\Delta^+(G)$ is large, we have
    \begin{equation*}
        s(\mathcal{E}_{s,t}^N) \approx m^{1-N}.
    \end{equation*}
\end{lemma}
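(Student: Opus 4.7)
The plan is to translate the quantity $\lVert \mathcal{E}_{s,t}^N \rVert_0$ into a count of walks on $G$ and then apply a standard out-degree argument to obtain the bound, its equality condition, and finally the asymptotic.

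\textbf{Step 1 (walk-counting dictionary).} Using the representation \eqref{eq:EulerWalks} together with the correspondence $\mathcal{F}$ (which sends an $(m+1)$-vertex walk to a length-$m$ multi-index, matching level $m$ of the signature), the terms indexed by the scheme correspond bijectively to walks on $G$ of bounded length. Treating the leading $y_s$ as the contribution of the single-vertex (zero-edge) walks, one gets
\begin{equation*}
    \lVert \mathcal{E}_{s,t}^N \rVert_0 = \sum_{v \in V}\sum_{k=0}^{N} n_k(v),
\end{equation*}
where $n_k(v)$ denotes the number of walks of exactly $k$ edges starting at $v$.

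\textbf{Step 2 (out-degree bound).} Writing $\Delta := \Delta^+(G)$, an immediate induction on $k$ using $\deg^+(v') \leq \Delta$ for every $v' \in V$ yields $n_k(v) \leq \Delta^k$ for every $v, k$. Summing,
\begin{equation*}
    \lVert \mathcal{E}_{s,t}^N \rVert_0 \;\leq\; m \sum_{k=0}^{N} \Delta^k \;=\; m \cdot \frac{\Delta^{N+1}-1}{\Delta-1},
\end{equation*}
and multiplying through by $(d-1)/(d^{N+1}-1)$ yields the stated sparsity bound.

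\textbf{Step 3 (equality case).} Equality in the bound $n_1(v) \leq \Delta$ for every $v$ already forces every vertex to have out-degree exactly $\Delta$, i.e.\ $G$ is $\Delta^+(G)$-regular. Conversely, $\Delta$-regularity propagates through the induction to give $n_k(v) = \Delta^k$ for all $v, k$, so equality holds at each level of the sum simultaneously.

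\textbf{Step 4 (asymptotic).} Under $\Delta$-regularity one has $d = m\Delta$ by summing out-degrees. Substituting into the equality form,
\begin{equation*}
    s(\mathcal{E}_{s,t}^N) = \frac{m\Delta - 1}{(m\Delta)^{N+1} - 1} \cdot \frac{m(\Delta^{N+1}-1)}{\Delta - 1} \;\sim\; \frac{m\Delta \cdot m \cdot \Delta^N}{(m\Delta)^{N+1}} = m^{1-N}
\end{equation*}
as $\Delta \to \infty$, giving the claimed approximation.

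\textbf{Main obstacle.} The argument is essentially elementary graph-theoretic counting; there is no deep step. The only real care needed is (i) aligning the slightly ambiguous indexing conventions so that the $y_s$ term is correctly absorbed into the walk count (the off-by-one between walk vertex count and multi-index length introduced by $\mathcal{F}$), and (ii) verifying that the equality case genuinely characterises regularity --- which it does, since equality at $k=1$ already pins down every vertex's out-degree. The asymptotic is routine algebra once $d = m\Delta$ is in hand.
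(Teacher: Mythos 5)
Your proof is correct and follows essentially the same approach as the paper: bound the walk count $\lVert \mathcal{E}_{s,t}^N \rVert_0$ by $m \cdot \frac{[\Delta^+(G)]^{N+1}-1}{\Delta^+(G)-1}$ via the maximum out-degree, then substitute $d = m\Delta^+(G)$ in the $\Delta^+(G)$-regular case to obtain the $m^{1-N}$ asymptotic. You spell out the induction on $k$ and the equality characterization a bit more explicitly than the paper's terse proof, but the underlying argument is identical.
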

\begin{proof}
    It is easy to see that, for any $j$,
    \begin{equation*}
    |\omega_N(G)| \leq m\frac{\left[ \Delta^+(G) \right]^{N+1} - 1}{\Delta^+(G) - 1}
    \end{equation*}
    from which the stated inequality follows by noting that $\left\lVert \mathcal{E}^N_{s,t} \right \rVert_0 = |\omega_N(G)|$. When $G$ is $\Delta^+(G)$-regular, we have $d = |E| = m \cdot \Delta^+(G)$, and so when $\Delta^+(G)$ is large,
    \begin{align*}
        s(\mathcal{E}_{s,t}^N) &\leq m  \cdot \frac{d-1}{d^{N+1} - 1} \cdot \frac{\left[ \Delta^+(G) \right]^{N+1} - 1}{\Delta^+(G) - 1}\\
        &= m  \cdot \frac{(m\cdot\Delta^+(G))-1}{(m\cdot\Delta^+(G))^{N+1} - 1} \cdot \frac{\left[ \Delta^+(G) \right]^{N+1} - 1}{\Delta^+(G) - 1}\\
        &\approx m^{1-N}.
    \end{align*}
\end{proof}

It is notable that the approximate sparsity in the regular case is independent of $d$, although this is not surprising given the correspondence of walks on $G$ with signature coefficients of $x$. We will now consider some concrete examples of graphs constructed on $D$-dimensional lattices, and offer a slight refinement to Lemma \ref{lemma:graph_sparsity} in this case.

\begin{definition}
    Let $D, m \in \mathbb{N}$. For a $D$-dimensional coordinate $i = (i_1, \ldots, i_D) \in \{1, \ldots, m\}^D$, let $i^-$ denote the set of coordinates
    \begin{equation*}
        i^- = \{(i_1, \ldots, i_k - 1, \ldots, i_D) : 1 \leq k \leq D, \hspace{1mm} i_k > 1\}.
    \end{equation*}
    Given a path $\lambda \in C_1([0,1], \mathbb{R}^d)$ where $d = \sum_{i \in \{1,\ldots,m\}^D} |i^-|$, we will refer to
    \begin{equation} \label{eq:birth_only}
        dy^{(i)}_t = \sum_{j \in i^-} y^{(i)}_t d\lambda^{(j)}_t
    \end{equation}
    as the birth-only CDE on a $D$-dimensional lattice. Similarly, given a path $\mu \in C_1\left([0,1], \mathbb{R}^{m^D}\right)$, we will call
    \begin{equation} \label{eq:birth_death}
        dy^{(i)}_t = -y_t^{(i)} d\mu^{(i)}_t + \sum_{j \in I^-} y^{(j)}_t d\lambda^{(j)}_t
    \end{equation}
    the birth-death CDE on a $D$-dimensional lattice.
\end{definition}

\begin{theorem} \label{thm:lattice_sparsity}
    Consider the $N$-step Euler scheme \eqref{eq:EulerFull} applied to the CDE \eqref{eq:birth_only} or \eqref{eq:birth_death}. Then there exists a constant $C$ depending on $D$ and $N$ such that
    \begin{equation*}
        s(\mathcal{E}_{s,t}^N) \sim C m^{D(1-N)}
    \end{equation*}
     as $m \to \infty$. Moreover, $C = N$ if $D = 1$ and
     \begin{equation*}
         C \to \frac{D}{D-1} \quad \text{as } N \to \infty
     \end{equation*}
     if $D > 1$.
\end{theorem}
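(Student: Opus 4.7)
The plan is to reduce the sparsity to an explicit combinatorial enumeration of walks on the lattice graph $G$, then extract the leading-order behaviour as $m \to \infty$. Under the assumptions of Remark~\ref{remark:CDE_assumptions}, $\lVert \mathcal{E}^N_{s,t} \rVert_0$ counts admissible walks in $G$ of length at most $N$, and a direct combinatorial count yields a closed-form expression in $m$ which can then be substituted into Definition~\ref{def:sparsity}.

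First, for the birth-only CDE the graph $G$ has vertex set $\{1, \ldots, m\}^D$ with edges $(j, j + e_l)$ whenever $j_l < m$. I would parameterise a walk of $k$ edges from $v$ by the sequence $c_1, \ldots, c_k \in \{1, \ldots, D\}$ of coordinate advances; setting $n_l = |\{i : c_i = l\}|$, the walk remains inside the lattice iff $v_l + n_l \leq m$ for each $l$, giving $\prod_l (m - n_l)$ valid starting vertices. Summing over parameterisations yields
\[
W_k = \sum_{n_1 + \cdots + n_D = k} \binom{k}{n_1, \ldots, n_D} \prod_{l=1}^D (m - n_l).
\]
For the birth-death CDE, $G$ additionally has a self-loop at each vertex; the same parameterisation now allows an auxiliary count $n_0$ of loop-steps, and the analogous formula holds with the multinomial summation eventually producing $(D+1)^k$ in place of $D^k$.

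Second, I would extract the leading order in $m$. Uniformly for multisets with $n_l \leq k \leq N$, $\prod_l (m - n_l) = m^D + O(m^{D-1})$, and by the multinomial theorem $\sum_{n_1 + \cdots + n_D = k} \binom{k}{n_1, \ldots, n_D} = D^k$, so $W_k = D^k m^D + O(m^{D-1})$. The edge count is $d = D m^{D-1}(m-1) \sim D m^D$. Substituting into Definition~\ref{def:sparsity},
\[
s(\mathcal{E}^N_{s,t}) = \frac{d - 1}{d^{N+1} - 1} \sum_{k=1}^N W_k \;\sim\; \frac{D m^D \cdot m^D \sum_{k=1}^N D^k}{D^{N+1} m^{D(N+1)}} = \left( \frac{1}{D^N} \sum_{k=1}^N D^k \right) m^{D(1-N)},
\]
so $C = D^{-N} \sum_{k=1}^N D^k$. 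This evaluates to $N$ when $D = 1$, and to $\frac{D(1 - D^{-N})}{D - 1}$ when $D > 1$, which tends to $D/(D-1)$ as $N \to \infty$, giving both ``moreover'' clauses. The birth-death case proceeds identically with the analogous count producing a constant of the form $a^{-N}\sum_{k=1}^N a^k$ for the appropriate $a$, with the same $m^{D(1-N)}$ scaling.

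The main technical obstacle is controlling the $O(m^{D-1})$ corrections to $W_k$ uniformly in $k \leq N$ so that they do not disturb the leading asymptotic. A uniform bound $\lvert \prod_l(m - n_l) - m^D \rvert \leq c(D,N) m^{D-1}$ followed by the multinomial summation gives $|W_k - D^k m^D| \leq c'(D, N) m^{D-1}$; after substitution into $s(\mathcal{E}^N_{s,t})$ and using $d^{N+1} - 1 \sim D^{N+1} m^{D(N+1)}$, these corrections contribute only $O(m^{D(1-N) - 1})$, strictly subleading to the claimed $m^{D(1-N)}$ behaviour.
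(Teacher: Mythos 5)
Your proof is correct and takes essentially the same walk-counting approach as the paper's, but with a dual bookkeeping: the paper fixes a sufficiently-interior vertex (those with all $i_k \leq m - N$) and counts the exact number $C_1 = \sum_{k=1}^N D^k$ of admissible walks emanating from it, declaring the $\mathcal{O}(m^{D-1})$ boundary vertices to contribute subleading corrections; you instead fix a sequence of coordinate advances $(c_1,\ldots,c_k)$ and count the $\prod_l(m - n_l)$ admissible starting vertices for that sequence, then sum over sequences via the multinomial identity. Both routes give $\lVert \mathcal{E}^N_{s,t}\rVert_0 \sim m^D \sum_{k=1}^N D^k$ and $d \sim D m^D$ and hence the identical constant $C = D^{-N}\sum_{k=1}^N D^k$. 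Your version is somewhat more explicit about uniformity of the $\mathcal{O}(m^{D-1})$ error terms in $k \leq N$, and your edge count $d = D m^{D-1}(m-1)$ is exact where the paper's $D(m-1)^D + \mathcal{O}(m^{D-1})$ is only asymptotic; neither difference changes the argument. One small caveat you share with the paper: for the birth-death CDE the out-degree becomes $D+1$, so the constant comes out as $(D+1)^{-N}\sum_{k=1}^N (D+1)^k$, which tends to $(D+1)/D$ rather than $D/(D-1)$; the ``Moreover'' clause of the theorem therefore really pertains only to the birth-only case, a point the paper glosses over by saying the birth-death case ``follows the same argument,'' and which your write-up correctly leaves as ``for the appropriate $a$'' rather than claiming the same limiting constant.
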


\begin{proof}
    We will prove the birth-only case, as the birth-death case follows the same argument. Consider the graph representation of the CDE \eqref{eq:birth_only}. It is clear that for any $i = (i_1, \ldots, i_D)$ such that $i_k \leq m - N$ for all $k$, the number of distinct walks of length $\leq N$ starting at node $y^{(i)}$ equals $C_1 := \sum_{k= 1}^N D^k =  (D^{N+1} - D)/(D-1)$ if $D > 1$ and $C_1 := N $ if $D = 1$. The remaining $\mathcal{O}(m^{D-1})$ many nodes have strictly fewer walks. It follows that
    \begin{equation*}
        \lVert \mathcal{E}_{s,t}^N \rVert_0 = |\omega_N(G)| = C_1(m-N)^{D} + \mathcal{O}(m^{D-1}).
    \end{equation*}
    By a similar argument, for any $i = (i_1, \ldots, i_D)$ such that $i_k \leq m - 1$, the node $y^{(i)}$ has out-degree $D$. Thus
    \begin{equation*}
        d = D(m - 1)^D + \mathcal{O}(m^{D-1}).
    \end{equation*}
    The result follows immediately from Definition \ref{def:sparsity}.
\end{proof}

\subsection{A Toy Example}

As an example of a 1-dimensional birth-death CDE, consider tracking the generations in a population of organisms over a year. Let $y^{(k)}_t$ denote the population of the $k^{th}$ generation at time $t$ for $1\leq k \leq n$, and let $y^{(>n)}_t$ denote the cumulative population of all generations beyond the $n^{th}$. We will assume $y^{(k)}_0 = 1$ for all $1 \leq k \leq n$ and $y^{(>n)}_0 = 1$. \vspace{-5mm}

\resizebox{0.95\textwidth}{!}{
\begin{tikzpicture}[line cap=round,line join=round,>=triangle 45,x=1.0cm,y=1.0cm]
\clip(-4,-2) rectangle (12,3.5);

\node (zero) [draw, circle, minimum size=1cm] at (-2,1) {$y^{(1)}$};
\node (one) [draw, circle, minimum size=1cm] at (0.5,1) {$y^{(2)}$};
\node (two) [draw, circle, minimum size=1cm] at (3,1) {$y^{(3)}$};
\node (en) [draw, circle, minimum size=1cm] at (6,1) {$y^{(n)}$};
\node (enplusone) [draw, circle, minimum size=1cm] at (8.5,1) {$y^{(>n)}$};

\draw [->] (zero) -- node [midway, above] {$d\lambda_1(t)$} (one);
\draw [->] (one) -- node [midway, above] {$d\lambda_2(t)$} (two);
\draw [->, very thick, dashed] (two) -- (en);
\draw [->] (en) -- node [midway, above] {$d\lambda_n(t)$} (enplusone);

\draw[->] (zero) edge[in=-120, out=-60, loop] node[below] {$-d\mu_1(t)$} ();
\draw[->] (one) edge[in=-120, out=-60, loop] node[below] {$-d\mu_2(t)$} ();
\draw[->] (two) edge[in=-120, out=-60, loop] node[below] {$-d\mu_2(t)$} ();
\draw[->] (en) edge[in=-120, out=-60, loop] node[below] {$-d\mu_n(t)$} ();
\draw[->] (enplusone) edge[in=-120, out=-60, loop] node[below] {$-d\mu_{>n}(t)$} ();
\end{tikzpicture}
}

Suppose the birth rate is seasonal, such that over a year, the cumulative birth rate of the $k^{th}$ generation follows the sigmoidal function
\begin{equation*}
    \lambda_k(t) = \frac{a(k)}{1+\exp(-b(k)(t - c(k)))}, \quad t \in [0,1],
\end{equation*}

where $a(k)$ can be interpreted as a parameter controlling the total birth within a cycle, $b(k)$ controlling the speed of birth and $c(k)$ the start of the mating season. Moreover, suppose that within the year, there is a death-triggering event, such as an epidemic, where that the cumulative death rate follows the sigmoidal function
\begin{equation*}
    \mu_k(t) = \frac{d(k)}{1+\exp(-f(k)( t - t_{\text{ep}}))}, \quad t \in [0,1],
\end{equation*}

where $d(k)$ is the total impact on generation $k$, $f(k)$ is the duration of the impact on generation $k$ and $t_{\text{ep}}$ is the time of the peak of the epidemic. Figure \ref{fig:epidemic} shows the results of a $5$-step Euler scheme on the corresponding CDE, for some fixed choices of the above parameters.

\section{Conclusion} \label{chapter:conclusion}

In this paper, we have discussed methods for the computation of a single signature coefficient, $S(x)^I$, located at depth $n$ of the signature of a piecewise linear path $x$ composed of $L$-many linear segments. With Chen's relation, this can be achieved in $\mathcal{O}(Ln^2)$ time, and reduced to $\mathcal{O}(Ln)$ time after a parallelising the computation. We proposed the use of signature kernels to construct an approximation to signature coefficients by \say{filtering} out the required coefficient from the transform. The resulting algorithm has a serial complexity of $\mathcal{O}(Ln2^n)$, but with the benefit of being highly parallelisable, resulting in a parallel complexity of $\mathcal{O}(L)$. With the advent of massively parallel processors, this poses a viable alternative to Chen's relation when fast computation times are critical. We showed that, as with Chen's relation, intermediate steps in the computation allow for the extraction of related coefficients which are often required in practical applications. Finally, we presented a simple example use case for sparse collections of signature coefficients by considering $N$-step Euler schemes for sparse CDEs and state-space models.\par\vspace{2cm}

\begin{figure*}[h!]
    \centering
    \includegraphics[width = 0.92\textwidth, trim={0 0.7cm 0 1.8cm},clip]{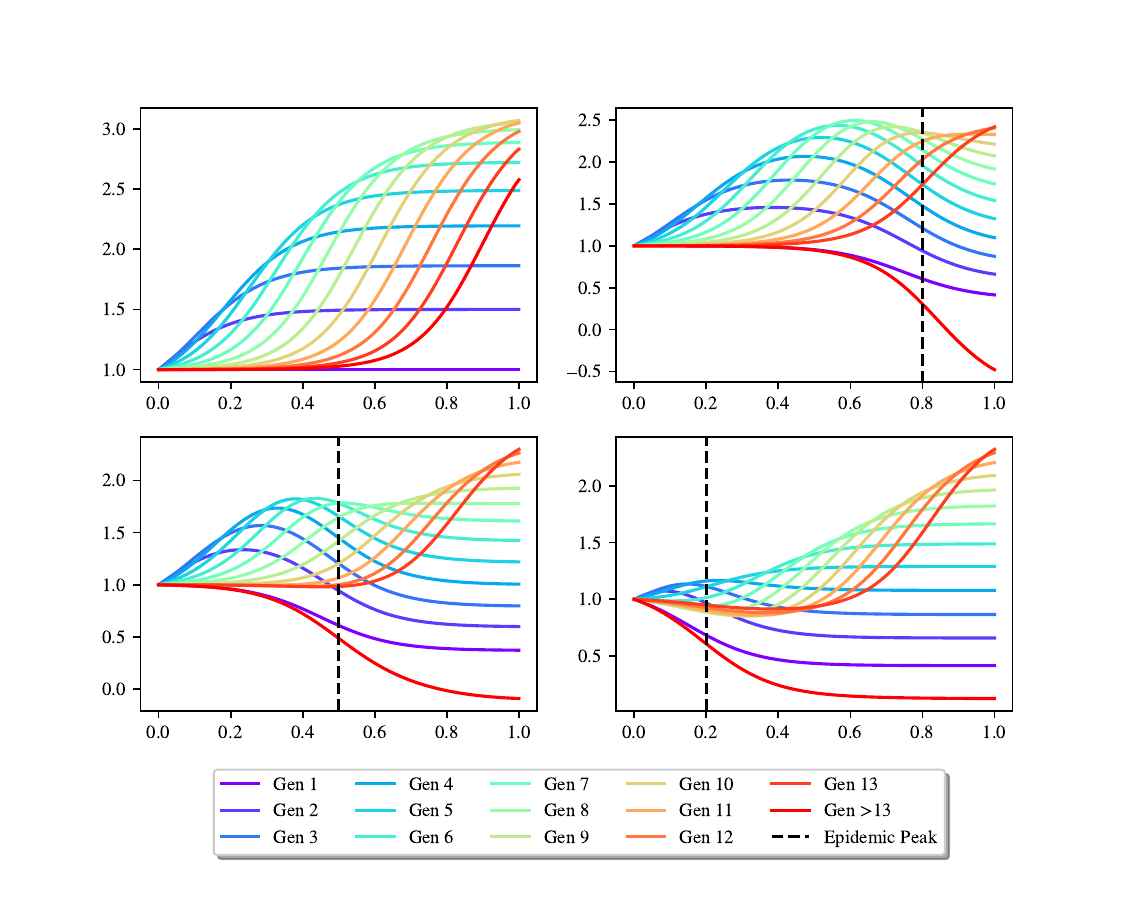}
    \caption{Generational model with $n = 13$ approximated using a $5$-step Euler scheme, leading to a sparsity of  $s(\mathcal{E}_{s,t}^5) = 5 \times 10^{-5}$. Model parameters are chosen as $a(k) = 1$, $b(k) = f(k) = 10$, $c(k) = k / 14$, $d(k) = (14-k)/14$. Epidemic is chosen to never occur (top left), start late (top right), start mid-year (bottom left) or start early (bottom right).}
    \label{fig:epidemic}
\end{figure*}

\newpage
\appendix
\section{Proof of Proposition \ref{thm:kderiv}} 
\label{proof:kderiv}

We will first prove a lemma detailing how the derivative acts on levels of the signature, before moving on to the proof of the theorem itself.

\begin{lemma}
    For any $z \in C_1([0,1], \mathbb{R}^n)$,
    \begin{equation*}
        \frac{\partial ^n}{\partial \lambda_1 \cdots \partial \lambda_n} \biggr\rvert_{\lambda = 0} \hspace{1mm} S(\lambda \odot z)^{(m)}_{[0,1]} = 
        \begin{cases}
            \sum_{J \in \mathcal{P}(1, \ldots, n)} S(z)^{J}_{[0,1]} e_J, & m = n \\
            \hfil \mathbf{0}, & m \neq n,
        \end{cases}
    \end{equation*}
    where $e_J = e_{j_1} \cdots e_{j_n}$ for $J = (j_1, \ldots, j_n)$.
\end{lemma}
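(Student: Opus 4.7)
The proof is essentially a direct computation once we exploit how the component-wise scaling factors through iterated integration. My plan has three steps.

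First, I would establish the basic multiplicative identity: for any multi-index $J = (j_1, \ldots, j_m)$,
\begin{equation*}
    S(\lambda \odot z)^J_{[0,1]} = S(z)^J_{[0,1]} \prod_{k=1}^m \lambda_{j_k}.
\end{equation*}
This follows immediately from the definition of the signature coefficient in \eqref{defn:signature_coeff}, since $d(\lambda \odot z)^{(j)}_t = \lambda_j dz^{(j)}_t$ and the scalars $\lambda_{j_k}$ pull out of the iterated Young integral. Summing over $J$ and reassembling tensorially gives
\begin{equation*}
    S(\lambda \odot z)^{(m)}_{[0,1]} = \sum_{J \in \{1,\ldots,n\}^m} S(z)^J_{[0,1]} \left(\prod_{k=1}^m \lambda_{j_k}\right) e_J.
\end{equation*}

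Second, I would analyse which terms survive the cross-derivative. Each summand is $S(z)^J e_J$ multiplied by a monomial $\prod_{k=1}^m \lambda_{j_k}$ of total degree $m$ in $\lambda_1, \ldots, \lambda_n$. Applying $\partial^n / \partial\lambda_1 \cdots \partial\lambda_n$ and then evaluating at $\lambda = 0$ annihilates any monomial unless it is of the form $\lambda_1 \lambda_2 \cdots \lambda_n$ (every $\lambda_i$ present to exactly the first power); otherwise either some $\lambda_i$ is missing, killing the derivative, or some $\lambda_i$ appears with degree $\geq 2$, leaving a positive power that vanishes at $\lambda = 0$. This forces $m = n$ and $J \in \mathcal{P}(1,\ldots, n)$; in that case the derivative of the monomial equals $1$.

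Finally, I would conclude. If $m \neq n$ the sum is identically zero. If $m = n$, only the surviving $J \in \mathcal{P}(1,\ldots,n)$ contribute, yielding
\begin{equation*}
    \frac{\partial^n}{\partial \lambda_1 \cdots \partial \lambda_n}\Big\rvert_{\lambda = 0} S(\lambda \odot z)^{(n)}_{[0,1]} = \sum_{J \in \mathcal{P}(1,\ldots,n)} S(z)^J_{[0,1]} e_J,
\end{equation*}
as claimed. There is no real obstacle beyond careful bookkeeping of multi-indices; the exchange of derivative and sum is trivial because each level is a finite sum of monomials in the $\lambda_i$.
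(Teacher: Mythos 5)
Your proposal is correct and follows essentially the same approach as the paper: both exploit the fact that the component-wise scaling factors out of the iterated integral as a monomial $\prod_k \lambda_{j_k}$, and that the cross-derivative $\partial^n/\partial\lambda_1\cdots\partial\lambda_n$ at $\lambda = 0$ annihilates every such monomial except $\lambda_1\cdots\lambda_n$, which is produced exactly by $m = n$ and $J \in \mathcal{P}(1,\ldots,n)$. The paper performs the differentiation directly under the tensor-valued integral, whereas you first decompose level $m$ into its coordinate coefficients and then differentiate the resulting scalar monomials; this is merely a change of bookkeeping, not of substance, and if anything your version makes the $m \neq n$ case more explicit than the paper's "by a similar calculation" remark.
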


\begin{proof}
For $m = n$,
    \begin{align*}
    \frac{\partial ^n}{\partial \lambda_1 \cdots \partial \lambda_n} \biggr\rvert_{\lambda = 0} \hspace{1mm}  S(\lambda \odot z)^{(m)}_{[0,1]} &= \frac{\partial ^n}{\partial \lambda_1 \cdots \partial \lambda_n} \biggr\rvert_{\lambda = 0} \hspace{1mm} \int_{0 < t_1 < \cdots < t_n < 1} d(\lambda \odot z)_{t_1} \otimes \cdots \otimes d(\lambda \odot z)_{t_n} \\
    &= \frac{\partial ^n}{\partial \lambda_1 \cdots \partial \lambda_n} \biggr\rvert_{\lambda = 0} \hspace{1mm} \int_{0 < t_1 < \cdots < t_n < 1} (\lambda \odot dz_{t_1}) \otimes \cdots \otimes (\lambda \odot dz_{t_n}) \\
    &= \sum_{J \in \mathcal{P}(1, \ldots, n)} \int_{0 < t_1 < \cdots < t_k < 1} (e_{j_1} \odot dz_{t_1}) \otimes \cdots \otimes (e_{j_n} \odot dz_{t_n}) \\
    & = \sum_{J \in \mathcal{P}(1, \ldots, n)} S(z)^{J}_{[0,1]} e_J.
    \end{align*}
  
    By a similar calculation, it is easy to see that the derivative is $\mathbf{0} \in V^{\otimes m}$ if $m \neq n$.
\end{proof}

\vspace{5mm}

\newenvironment{proof1}{\paragraph{Proof of Proposition \ref{thm:kderiv}:}}{\hfill$\qed$}

\begin{proof1}
    \begin{align*}
        \frac{\partial ^n}{\partial \lambda_1 \cdots \partial \lambda_n} \biggr\rvert_{\lambda = 0} \hspace{1mm} \mathbf{k}_{x^I,\lambda \odot y} \hspace{0.5mm} &= \frac{\partial ^n}{\partial \lambda_1 \cdots \partial \lambda_n} \biggr\rvert_{\lambda = 0} \hspace{1mm} \left<S(x^I)_{[0,1]}, \hspace{1mm} S(\lambda \odot y)_{[0,1]} \right>\\
        &= \frac{\partial ^n}{\partial \lambda_1 \cdots \partial \lambda_n} \biggr\rvert_{\lambda = 0} \hspace{1mm} \sum_{i=0}^{\infty} \left<S(x^I)^{(i)}_{[0,1]}, \hspace{1mm} S(\lambda \odot y)^{(i)}_{[0,1]} \right>_{V^{\otimes i}}\\
        &= \sum_{i=0}^{\infty} \left<S(x^I)^{(i)}_{[0,1]}, \hspace{1mm} \frac{\partial ^n}{\partial \lambda_1 \cdots \partial \lambda_n} \biggr\rvert_{\lambda = 0} \hspace{1mm} S(\lambda \odot y)^{(i)}_{[0,1]} \right>_{V^{\otimes i}}\\
        &= \left<S(x^I)^{(n)}_{[0,1]}, \hspace{1mm}  \sum_{J \in \mathcal{P}(1, \ldots, n)} S(y)^{J}_{[0,1]} e_J\right>_{V^{\otimes n}} \\
        &= \sum_{J \in \mathcal{P}(1, \ldots, n)} S(x^I)^{J}_{[0,1]} S(y)^{J}_{[0,1]} \\
        &= \frac{1}{n!} S(x)^{\mathcal{P}(I)}_{[0,1]},
    \end{align*}

    where the interchange of summation and differentiation is justified by the uniform convergence of the series of derivatives.

\end{proof1}

\newpage
\section{Proof of Proposition \ref{prop:vandermonde_inversion}} \label{proof:vandermonde_inversion}

\begin{proof}
    We define $\mathbf{e}^{(M+1)}_j(\mathbf{x})$, $\mathbf{e}^{(M+1)}_{j, l}(\mathbf{x})$ and $d_j$ as in \cite{arafat2022fast} and write $V_{M+1, n} = B_{n,M}$, $x_j = \beta_{j-1}$ to match the notation. By \cite[Theorem 2]{arafat2022fast} we have that
    \begin{equation*}
        \alpha_i = (B_{n,M})^{-1}_{i,1} = (V_{M+1, n})^{-1}_{i+1,1} = \frac{(-1)^M \mathbf{e}^{(M+1)} _{M+1, i+1} (\mathbf{x})}{d_{i+1}}.
    \end{equation*}
    By \cite[Equation 6]{arafat2022fast}, we have
    \begin{align*}
        \mathbf{e}^{(M+1)} _{M+1, i+1} (\mathbf{x}) &= \frac{\partial}{\partial x_{i+1}} \mathbf{e}^{(M+1)}_{M+1} (\mathbf{x})\\
        &= \frac{\partial}{\partial x_{i+1}} \prod_{j=1}^{M+1} x_j\\
        &= \prod_{\substack{j=1 \\ j \neq {i+1}}}^{M+1} x_j.
    \end{align*}
    Substituting in $d_{i+1} = x_{i+1}^n \prod\limits_{\substack{j=1 \\ j\neq i+1}}^{M+1} (x_{i+1} - x_j)$ and $x_j = \beta_{j-1}$ gives the result.
\end{proof}
\vspace{5mm}

\section{Proof of Proposition \ref{prop:vandermonde}} \label{proof:vandermonde}

\begin{proof} 
    Suppose $D^{(n)}_\lambda \cdot \big\rvert_{\lambda = 0}$ takes the form
    \begin{equation*}
        D^{(n)}_\lambda f(\lambda)\Big\rvert_{\lambda = 0} = \sum_{\widetilde \lambda \in \Lambda} C_{\widetilde \lambda} f(\widetilde \lambda)
    \end{equation*}
    for some constants $C_{\widetilde \lambda} \in \mathbb{R}$. Define
    \begin{align*}
        \ell_{\text{max}} &:= \max_{\widetilde \lambda \in \Lambda} \lVert \widetilde \lambda \odot y \rVert_1,\\
        C_{\text{max}} &:= \max_{\widetilde \lambda \in \Lambda} |C_{\widetilde \lambda}|.
    \end{align*}
    Then by Proposition \ref{prop:truncatedkernel},
    \begin{align*}
        &\left\lVert n! \sum_{i=0}^M \alpha_i D^{(n)}_\lambda \mathbf{k}^{\lambda \mhyphen \beta_i}_{x^I, y} \Big\rvert_{\lambda = 0} - S(x)^{\mathcal{P}(I)}\right \rVert\\
        &= \left \lVert n! \sum_{i=0}^M \alpha_i D^{(n)}_\lambda \sum_{j=n+M+1}^\infty \left< S\left(x^I\right)^{(j)}, S( \beta_i \lambda \odot y)^{(j)} \right>_{V^{\otimes j}}\Bigg\rvert_{\lambda = 0} \right \rVert \tag{Prop. \ref{prop:truncatedkernel}} \\
        &\leq n! \cdot C_{\text{max}} \cdot |\Lambda| \sum_{i=0}^M |\alpha_i| \sum_{j=n+M+1}^\infty \frac{\beta_i^j \left \lVert x^I \right \rVert_1^j \ell_{\text{max}}^j }{(j!)^2}  \tag{Cor. \ref{cor:cauchyfactorialdecay}}\\
        &\leq n! \cdot C_{\text{max}} \cdot |\Lambda| (M+1) \max_{0\leq i \leq M}{|\alpha_i|} \sum_{j=n+M+1}^\infty \frac{\left \lVert x^I \right \rVert^j_1 \ell_{\text{max}}^j}{(j!)^2}\\
        &\leq n! \cdot C_{\text{max}} \cdot |\Lambda| (M+1) \max_{0\leq i \leq M}{|\alpha_i|} \frac{(\left \lVert x^I \right \rVert_1 \ell_{\text{max}})^{n+M}}{((n+M)!)^2} \sum_{j=1}^\infty \frac{((n+M)!)^2}{((n+M+j)!)^2} \left \lVert x^I \right \rVert^j_1 \ell_{\text{max}}^j\\
        &\to 0 \tag{Cond. \eqref{eq:alphacond}}.
    \end{align*}
\end{proof}

\newpage
\section{Proof of Proposition \ref{prop:M_bound}} \label{proof:M_bound}

\begin{proof}
By the proof of Proposition \ref{prop:vandermonde}, we have

\begin{align*}
    &\left\lVert n! \sum_{i=0}^M \alpha_i D^{(n)}_\lambda \mathbf{k}^{\lambda \mhyphen \beta_i}_{x^I, y} \Big\rvert_{\lambda = 0} - S(x)^{\mathcal{P}(I)}\right \rVert\\
    &\leq n! \cdot C_{\text{max}} \cdot |\Lambda| (M+1) \max_{0\leq i \leq M}{|\alpha_i|} \frac{(\left \lVert x^I \right \rVert_1 \ell_{\text{max}})^{n+M}}{((n+M)!)^2} \sum_{j=1}^\infty \frac{((n+M)!)^2}{((n+M+j)!)^2} \left \lVert x^I \right \rVert^j_1 \ell_{\text{max}}^j.
\end{align*}

Moreover,
\begin{align*}
    \sum_{j=1}^\infty \frac{((n+M)!)^2}{((n+M+j)!)^2}\left \lVert x^I \right \rVert^j_1 \ell_{\text{max}}^j &\leq \sum_{j=1}^\infty \frac{\left \lVert x^I \right \rVert^j_1 \ell_{\text{max}}^j}{(j!)^2}\\
    &= I_0\left(2 \sqrt{\left \lVert x^I \right \rVert_1 \ell_{\text{max}}}\right) - 1.
\end{align*}

where $I_0$ is the modified Bessel function of the first kind, and
\begin{align*}
    \max_{0\leq i \leq M}{|\alpha_i|} &= \max_{0\leq i \leq M} \frac{1}{\beta_i^n} \prod_{\substack{j=0 \\ j\neq i}}^M \frac{\beta_j}{|\beta_i - \beta_j|} \\
    &= \max_{0\leq i \leq M}{\frac{M+1}{i+1} \prod^M_{\substack{j = 0 \\ j \neq i}} \left\lvert 1 - \left(\frac{j + 1}{i + 1}\right)^{1/n} \right\rvert^{-1}}\\
    &\leq \max_{0\leq i \leq M}{\frac{M+1}{i+1} \left(\left(\frac{i+2}{i+1}\right)^{1/n} - 1\right)^{-M}}\\
    &\leq (M+1) \left(2^{1/n} - 1\right)^{-M}.
\end{align*}

Setting
\begin{align*}
    A &:= n! \cdot C_{\text{max}} \cdot |\Lambda| \left(\left \lVert x^I \right \rVert_1 \ell_{\text{max}}\right)^n \left[I_0\left(2 \sqrt{\left \lVert x^I \right \rVert_1 \ell_{\text{max}}}\right) - 1\right]\\
    B &:= \ell_{\text{max}} \left(2^{1/n} - 1\right)^{-1}
\end{align*}

gives
\begin{equation*}
    \left\lVert n! \sum_{i=0}^M \alpha_i D^{(n)}_\lambda \mathbf{k}^{\lambda \mhyphen \beta_i}_{x^I, y} \Big\rvert_{\lambda = 0} - S(x)^{\mathcal{P}(I)}\right \rVert \leq A (M+1)^2 B^M \frac{ \left\lVert x^I \right \rVert_1^{n+M}}{((n+M)!)^2}
\end{equation*}

\end{proof}

\newpage
\section{Proof of Theorem \ref{thm:semiordered}} \label{proof:semiordered}

\begin{proof}
    By Chen's relation (Proposition \ref{prop:chen}) and Proposition \ref{prop:linearsignature}, it is easy to see that
    \begin{equation*}
        S(z)^{J} = \frac{1}{l_1! \cdots l_m!} \mathbf{1}\{J \in \mathcal{P}(I_1) * \cdots * \mathcal{P}(I_m)\}
    \end{equation*}
    for all $J = (j_1, \ldots, j_n)$. By the construction, we have
    \begin{align*}
        &\sum_{i=0}^M \alpha_i D^{(n)}_\lambda \sum_{j=0}^{n+M} \left<S(\beta_i \lambda \odot x^I)^{(j)}, S(z)^{(j)}\right>_{V^{\otimes j}} \Bigg\rvert_{\lambda = 0}\\
        &= \sum_{J \in \mathcal{P}(1, \ldots, n)} S(x^I)^J S(z)^J\\
        &=\frac{1}{l_1! \cdots l_m!} S(x)^{\mathcal{P}(I_1) * \cdots * \mathcal{P}(I_m)}.
    \end{align*}
    Moreover, following the same argument as in the proof of Proposition \ref{prop:vandermonde} in Appendix \ref{proof:vandermonde} but with $y$ replaced by $z$, the residual terms satisfy
    \begin{align*}
        \left \lVert \sum_{i=0}^M \alpha_i D^{(n)}_\lambda \sum_{j=n+M+1}^\infty \left< S(\beta_i \lambda \odot x^I)^{(j)}, S(z)^{(j)} \right>_{V^{\otimes j}} \Bigg\rvert_{\lambda = 0} \right \rVert \to 0
    \end{align*}
    as $M \to \infty$, for any $\beta_i$ satisfying Condition \eqref{eq:alphacond}. It follows that
    \begin{equation*}
        \left\lVert \sum_{i=0}^M \alpha_i D^{(n)}_\lambda \mathbf{k}^{\lambda \mhyphen \beta_i}_{x^I, \hspace{0.5mm} z} \Big\rvert_{\lambda = 0} - \frac{1}{l_1! \cdots l_m!} S(x)^{\mathcal{P}(I_1) * \cdots * \mathcal{P}(I_m)} \right\rVert \to 0.
    \end{equation*}
\end{proof}

\bibliographystyle{siamplain}
\bibliography{references}
\end{document}